\newtheorem{theorem}{Theorem}[section]
\newtheorem{lemma}[theorem]{Lemma}
\newtheorem{proposition}[theorem]{Proposition}
\newtheorem*{maintheorem}{Main Theorem}
\theoremstyle{definition}
\theoremstyle{remark}
\newtheorem{remark}[theorem]{Remark}
\numberwithin{equation}{section} %
\def\sheaf#1{\ensuremath \mathcal#1}
\newcommand{\elm}{\ensuremath elm}%
\DeclareMathOperator{\supp}{Supp}%
\DeclareMathOperator{\divisorgroup}{Div}%
\DeclareMathOperator{\gon}{gon}%
\newcommand{\imageofblup}[1]{\ensuremath \widetilde{#1}}
\newcommand{\linsys}[1]{\ensuremath \lvert #1 \rvert}
\newcommand{\blup}{\ensuremath \varphi}
\newcommand{\bldown}{\ensuremath \psi}
\newcommand{\PP}{\ensuremath \mathbf{P}}
\newcommand{\lineqv}{\ensuremath \sim}
\newcommand{\deFranchis}{\ensuremath \mathbb{H}}
\newcommand{\CC}{\ensuremath \mathbb{C}}
\newcommand{\claiminline}{\textit{Claim}.~}
\begin{document}
\title[The minimal degree of plane models of double covers]{The minimal degree of plane models of double covers of smooth curves}

\author{Dongsoo Shin}

\address{Department of Mathematics, Chungnam National University, Daejeon 305-764, Korea}

\email{dsshin@cnu.ac.kr}


\subjclass[2000]{14H45; 14C20}%
\keywords{double covers, simple nets, primitive linear series}

\begin{abstract}
If $X$ is a smooth curve such that the minimal degree of its plane models is not too small compared with its genus, then $X$ has been known to be a double cover of another smooth curve $Y$ under some mild condition on the genera. However there are no results yet for the minimal degrees of plane models of double covers except some special cases. In this paper, we give upper and lower bounds for the minimal degree of plane models of the double cover $X$ in terms of the gonality of the base curve $Y$ and the genera of $X$ and $Y$. In particular, the upper bound equals to the lower bound in case $Y$ is hyperelliptic. We give an example of a double cover which has plane models of degree equal to the lower bound.
\end{abstract}

\maketitle


\section{Introduction}

we work over the field $\CC$ of complex numbers and a curve is an irreducible complete algebraic curve over $\CC$. Let $X$ be a smooth curve of genus $g_x > 0$. There is a non-special very ample divisor of degree $d+1$ on $X$ for any $d \ge g_x + 2$ by Halphen's theorem, which induces an embedding $\phi : X \hookrightarrow \PP^{r}$, where $r = d+1-g_x$. A general projection $\pi : \PP^r - \PP^{r-3} \to \PP^3$ maps  $\phi(X)$ biregularly to a non-degenerate smooth curve $X'$ of degree $d+1$. Then a general projection $\pi : \PP^3 - \{P\} \to \PP^2$ with center $P \in X'$ maps $X'$ birationally onto a plane curve of degree $d$. Therefore $X$ has a plane model of degree $d$ for any $d \ge g_x + 2$, which shows that there is no maximum among degrees of plane models of $X$.

The minimal degree of plane models of $X$, which is denoted by $s_X(2)$, is what we are interested in. By the above discussion, $s_X(2) \le g_x + 2$. It has been classically known that $s_X(2) = [\frac{2(g_x + 4)}{3}]$ for a general curve $X$; cf.~Severi~\cite{Severi-1968}. On the other hand, if $s_{X}(2)$ is not too small compared with the genus $g_x$, then $X$ is a double cover of another smooth curve: Speaking more precisely, if $s_{X}(2) = g_x + 2 - t$ for some $t \ge 0$, then $X$ is a double cover of a smooth curve of genus at most $t$ provided that $g_x$ is sufficiently large with respect to $t$;  Keem-Martens~\cite[3.2]{Keem-Martens-2006}. For instance, if $s_{X}(2) = g_x + 2$, then $X$ is hyperelliptic; cf.~Coppens-Martens~\cite[2.2]{Coppens-Martens-2000}. If $s_{X}(2) = g_x + 1$, then $X$ is bi-elliptic (i.e., a double cover of an elliptic curve) provided that $g_x \ge 6$; Coppens-Martens~\cite[2.5]{Coppens-Martens-2000}. If $s_X(2) = g_x$ and $g_x \ge 16$, then $X$ is a double cover of a curve of genus $2$; Keem-Martens~\cite[4.1]{Keem-Martens-2006}.

It would be an interesting problem to compute $s_X(2)$ for double covers. Let $X$ be a double cover of a smooth curve $Y$ of genus $g_y$. There are few results in this direction: For example, if $X$ is hyperelliptic ($g_y=0$), then $s_X(2) = g_x+2$, and, if $X$ is bi-elliptic ($g_y=1$), then $s_X(2) = g_x + 1$ if $g_x \ge 4$; Coppens-Martens~\cite[2.2]{Coppens-Martens-2000}. To the author's knowledge, there is no further result yet on computing $s_X(2)$ in case $g_y \ge 2$.

In this paper, we give a bound for $s_X(2)$ in case $X$ is a double cover of a smooth curve $Y$ of genus $g_y \ge 2$.

\begin{maintheorem}
Let $X$ be a smooth curve of genus $g_x$ which is a double cover of a smooth curve $Y$ of genus $g_y \ge 2$. If $g_x \ge 4g_y-2$, then
\begin{equation*}
g_x - 2g_y + 1 + \gon(Y) \le s_X(2),
\end{equation*}
where $\gon(Y)$ is the gonality of $Y$. If $g_x \ge 8g_y + 2$, then
\begin{equation*}
s_X(2) \le g_x - 2g_y - 1 + 2\gon(Y).
\end{equation*}
\end{maintheorem}

Furthermore, for any given smooth curve $Y$ of genus $g_y \ge 2$, we construct a double cover $X$ of $Y$ with genus $g_x$ attaining the minimal possible degree $g_x - 2g_y + 1 + \mathrm{gon}(Y)$ of plane models under some mild condition on the genera, which shows that the lower bound $g_x - 2g_y + 1 + \mathrm{gon}(Y)$ is sharp.

The main idea of the proof is to investigate primitive pencils and non-primitive ones on $X$ which are not induced by the double covering $f : X \to Y$: A complete base-point-free linear series is said to be \emph{primitive} if its dual is also base-point-free. In \S\ref{sec:primitive} we prove that a base-point-free pencil of degree $d$ which is not induced by $f$ is primitive if $d \le g_x - 2g_y - 1 + \gon(Y)$; we then get a lower bound $s_X(2) \ge g_x - 2g_y + 1 + \gon(Y)$ by a simple calculation. In order to prove the primitiveness of a given base-point-free pencil $\linsys{D}$, we map $X$ into a certain ruled surface associated with $\linsys{D}$ and observe the possible base locus of the dual series $\linsys{K_X - D}$ by means of the generalized adjunction formula on the ruled surface.

On the other hand, for any given smooth curve $Y$ of genus $g_y \ge 2$ and an integer $g_x$ with some mind condition on $g_x$ and $g_y$, we construct a double cover $X$ of $Y$ with genus $g_x$ admitting a non-primitive complete pencil $\linsys{D}$ of degree $g_x - 2g_y + \gon(Y)$. Adding a base point $Q$ of $\linsys{K_X - D}$ to $\linsys{D}$, we obtain a base-point-free net $\linsys{D+Q}$ which gives the plane model of $X$ with the minimal possible degree $g_x - 2g_y + 1 + \gon(Y)$.

In the final section, we investigate simple nets of minimal degree on double covers whose base curves are hyperelliptic.


\subsection*{Notations and Conventions}

A \emph{smooth curve} will be algebraic and irreducible unless otherwise stated. A smooth curve $X$ is called a \emph{double cover} of a smooth curve $Y$ if there exists a \emph{double covering} $f : X \to Y$, that is, a finite flat morphism of degree $2$. A base-point-free linear series $g_{d}^{r}$ on $X$ is said to be \emph{induced by $f$} if there exists a linear series $g_{d/2}^{r}(Y)$ on $Y$ such that $g_{d}^{r} = f^{\ast}{g_{d/2}^{r}(Y)}$. A base-point-free linear series $\linsys{D}$ is said to be \emph{simple} if the morphism $\phi_{\linsys{D}} : X \to \PP^r$ associated with $\linsys{D}$ is birational onto its image. The \emph{gonality} of $Y$, which is denoted by $\gon(Y)$, is defined by
\begin{equation*}
\gon(Y) = \min \{ d : \text{there exists a base-point-free pencil $g_{d}^{1}$ on $Y$}\}.
\end{equation*}
For a ruled surface $\PP = \PP(\sheaf{O_Y} \oplus \sheaf{O_Y}(-Z))$ over $Y$, the morphism $\pi_{Z} : \PP \to Y$ denotes the \emph{projection} and a section $S_{Z}$ denotes a \emph{minimal degree section} of $\PP$ whose self-intersection number is minimal among sections of $\PP$. For a smooth variety $W$, the divisor $K_W$ of $W$ denotes a canonical divisor of $W$.


\subsubsection*{Acknowledgements} The author would like thank Prof.~Changho Keem for suggesting this problem and giving valuable comments. The author would also like thank the referee for pointing out some errors in the previous version of the paper and for providing numerous comments which improved the article.


\section{Preliminaries}
Throughout this paper, let $X$ be a smooth curve of genus $g_x$ and we always assume that there is a double covering $f : X \to Y$, where $Y$ is a smooth curve of genus $g_y \ge 2$. In this section we collect some useful facts which we make use of quite often. Let $\PP = \PP(\sheaf{O_Y} \oplus \sheaf{O_Y}(-Z))$ be a ruled surface over $Y$ where $Z \in \divisorgroup(Y)$ is an effective divisor and let $h : X \to \PP$ be a morphism which is birational onto its image such that $\pi_Z \circ h = f$.

\subsection{Double covers}
It is well known that
\begin{equation}\label{equation:f_*(O_X)}
f_{\ast}{\sheaf{O_X}} = \sheaf{O_Y} \oplus \sheaf{O_Y}(-B)
\end{equation}
for some effective divisor $B \in \divisorgroup(Y)$ with
\begin{equation}\label{equation:deg(B)}
\deg{B} = g_x - 2g_y + 1.
\end{equation}
The branch divisor of $f$ is linearly equivalent to $2B$; cf.~Hartshorne~\cite[IV, Ex.2.6(d)]{Hartshorne-1977}. Let $M \in \divisorgroup(Y)$. By Hartshorne~\cite[III, Ex.4.1]{Hartshorne-1977} and the projection formula, we have
\begin{equation}\label{equation:H^0(X, f^*(M))}
\begin{split}
H^0(X, \sheaf{O_X}(f^{\ast}{M}))
&\cong H^0(Y, f_{\ast}(\sheaf{O_X} \otimes f^{\ast}{\sheaf{O_Y}(M)})) \\
&\cong H^0(Y, (\sheaf{O_Y} \oplus \sheaf{O_Y}(-B)) \otimes \sheaf{O_Y}(M)) \\
&\cong H^0(Y, \sheaf{O_Y}(M) \oplus \sheaf{O_Y}(M-B)).
\end{split}
\end{equation}
By the Hurwitz relation (cf.~Persson~\cite[2.1]{Persson-1978}),
\begin{equation}\label{equation:K_X-f^*K_Y+f^*B}
K_X \lineqv f^{\ast}{K_Y} + f^{\ast}{B}.
\end{equation}

\begin{lemma}\label{lemma:|f^*Z|=|h^*T|}
Suppose that $h(X) \lineqv 2S_Z + \pi_Z^{\ast}(2Z)$. If $Z = B + Z'$ for some effective divisor $Z' \in \divisorgroup(Y)$ with $h^0(Y, \sheaf{O_Y}(Z')) = 1$, then
\begin{equation*}
\linsys{f^{\ast}{Z}} = \{ h^{\ast}{T} : T \in \linsys{S_Z + \pi_Z^{\ast}{Z}}\}.
\end{equation*}
\end{lemma}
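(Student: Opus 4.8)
The plan is to analyze how sections on the ruled surface $\PP$ pull back to $X$ and use the decomposition \eqref{equation:f_*(O_X)} to match dimensions. First I would note that $\pi_Z^\ast Z$ restricted along $h$ gives $h^\ast \pi_Z^\ast Z = f^\ast Z$, since $\pi_Z \circ h = f$. So for any $T \in \linsys{S_Z + \pi_Z^\ast Z}$ we have $h^\ast T \in \linsys{f^\ast S_Z + f^\ast Z}$ — wait, more carefully: $h^\ast \mathcal{O}_\PP(S_Z + \pi_Z^\ast Z)$ is a line bundle on $X$, and I would compute its class. The key point is that $h^\ast S_Z$ is an effective divisor on $X$ whose class I can pin down because $S_Z$ is the minimal section with $S_Z \cdot \pi_Z^\ast(\text{pt}) = 1$, so $h^\ast S_Z$ has degree equal to $h(X) \cdot S_Z$ divided by the degree of $h$ onto its image (which is $1$, birational). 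Using $h(X) \lineqv 2S_Z + \pi_Z^\ast(2Z)$ and the standard intersection theory on $\PP(\mathcal{O}_Y \oplus \mathcal{O}_Y(-Z))$ — namely $S_Z^2 = -\deg Z$, $S_Z \cdot \pi_Z^\ast(\text{pt}) = 1$, $\pi_Z^{\ast 2} = 0$ — I get $h(X) \cdot S_Z = 2S_Z^2 + 2\deg Z = 0$, so $h^\ast S_Z$ has degree $0$ and, being effective, is the zero divisor. Hence $h^\ast \mathcal{O}_\PP(S_Z) \cong \mathcal{O}_X$ and $h^\ast \mathcal{O}_\PP(S_Z + \pi_Z^\ast Z) \cong \mathcal{O}_X(f^\ast Z)$, which gives the inclusion $\{h^\ast T : T \in \linsys{S_Z + \pi_Z^\ast Z}\} \subseteq \linsys{f^\ast Z}$.

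For the reverse inclusion I would compare dimensions of the two linear systems, and this is where the hypotheses $Z = B + Z'$ and $h^0(Y, \mathcal{O}_Y(Z')) = 1$ enter. On the $X$ side, \eqref{equation:H^0(X, f^*(M))} with $M = Z$ gives $h^0(X, \mathcal{O}_X(f^\ast Z)) = h^0(Y, \mathcal{O}_Y(Z)) + h^0(Y, \mathcal{O}_Y(Z - B)) = h^0(Y, \mathcal{O}_Y(Z)) + h^0(Y, \mathcal{O}_Y(Z'))= h^0(Y,\mathcal{O}_Y(Z)) + 1$. On the $\PP$ side, by the projection formula for $\pi_Z$ one has $(\pi_Z)_\ast \mathcal{O}_\PP(S_Z + \pi_Z^\ast Z) \cong (\pi_Z)_\ast \mathcal{O}_\PP(S_Z) \otimes \mathcal{O}_Y(Z)$, and $(\pi_Z)_\ast \mathcal{O}_\PP(S_Z) \cong \mathcal{O}_Y \oplus \mathcal{O}_Y(-Z)$ (the standard fact that pushing forward the tautological section bundle recovers the defining bundle of the ruled surface, with the appropriate normalization making $S_Z$ the minimal section). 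Therefore $h^0(\PP, \mathcal{O}_\PP(S_Z + \pi_Z^\ast Z)) = h^0(Y, \mathcal{O}_Y(Z)) + h^0(Y, \mathcal{O}_Y(Z - Z)) $ — careful: it is $h^0(Y, \mathcal{O}_Y(Z)) + h^0(Y, \mathcal{O}_Y(0)) = h^0(Y, \mathcal{O}_Y(Z)) + 1$.

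So both linear systems have the same (projective) dimension $h^0(Y,\mathcal{O}_Y(Z))$, namely $h^0(Y,\mathcal{O}_Y(Z)) + 1 - 1$. Combined with the inclusion from the first paragraph, to finish I only need that the pullback map $H^0(\PP, \mathcal{O}_\PP(S_Z + \pi_Z^\ast Z)) \to H^0(X, \mathcal{O}_X(f^\ast Z))$ is injective, equivalently that no nonzero section of $\mathcal{O}_\PP(S_Z + \pi_Z^\ast Z)$ vanishes identically on $h(X)$; since $h(X) \lineqv 2S_Z + \pi_Z^\ast(2Z)$ and $S_Z + \pi_Z^\ast Z$ is "half" of this, a section of the smaller bundle vanishing on $h(X)$ would force $h(X) + (\text{effective}) \lineqv S_Z + \pi_Z^\ast Z$, impossible by degree/intersection reasons. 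Then the inclusion of equal-dimensional linear systems is an equality, which is exactly the claim. The main obstacle I anticipate is bookkeeping the normalization conventions — making sure $S_Z$ really is the section with $(\pi_Z)_\ast \mathcal{O}_\PP(S_Z) = \mathcal{O}_Y \oplus \mathcal{O}_Y(-Z)$ rather than a twist thereof, and checking the effectivity hypothesis $Z = B + Z'$ is genuinely used (it guarantees $h^0(Y,\mathcal{O}_Y(Z-B)) = h^0(Y,\mathcal{O}_Y(Z')) = 1$, pinning down the extra dimension exactly rather than merely bounding it) — but these are all routine once the intersection-theoretic setup on the ruled surface is fixed.
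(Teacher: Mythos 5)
Your proof is correct and follows essentially the same route as the paper: compute $h^0(X,\sheaf{O}_X(f^{\ast}Z))$ via \eqref{equation:H^0(X, f^*(M))} and $h^0(\PP,\sheaf{O}_{\PP}(S_Z+\pi_Z^{\ast}Z))$ via push-forward and the projection formula, use $h^0(Y,\sheaf{O}_Y(Z'))=1$ to see the dimensions agree, and conclude from injectivity of $h^{\ast}$ that the inclusion of linear systems is an equality. The only difference is that you spell out (via the intersection computation $h(X)\cdot S_Z=0$, hence $h(X)\cap S_Z=\varnothing$) why $h^{\ast}T$ lies in $\linsys{f^{\ast}Z}$ and why $h^{\ast}$ is injective, steps the paper dismisses as clear.
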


\begin{proof}
It is clear that the restriction morphism
\begin{equation*}
h^{\ast} : H^0(\PP, \sheaf{O_{\PP}}(S_Z + \pi_{Z}^{\ast}{Z})) \to H^0(X, \sheaf{O_X}(f^{\ast}{Z}))
\end{equation*}
is injective. By \eqref{equation:H^0(X, f^*(M))}, we have
\begin{equation}\label{equation:H^0(X, f^*(Z))}
H^0(X, \sheaf{O_X}(f^{\ast}{Z})) \cong H^0(Y, \sheaf{O_Y}(Z) \oplus \sheaf{O_Y}(Z'))
\end{equation}
On the other hand, by Hartshorne~\cite[V, 2.4]{Hartshorne-1977} and the projection formula, we have
\begin{equation}\label{equation:H^0(PP,S_Z+pi^*Z)}
\begin{split}
H^0(\PP, \sheaf{O_{\PP}}(S_Z + \pi_{Z}^{\ast}{Z})) &\cong H^0(Y, (\pi_Z)_{\ast}(\sheaf{O_{\PP}}(S_Z) \otimes \pi_Z^{\ast}{\sheaf{O_Y}(Z)})) \\ %
&\cong H^0(Y, (\sheaf{O_Y} \oplus \sheaf{O_Y}(-Z)) \otimes \sheaf{O_Y}(Z)) \\ %
&\cong H^0(Y, \sheaf{O_Y}(Z) \oplus \sheaf{O_Y})
\end{split}
\end{equation}
Since $h^0(Y, \sheaf{O_Y}(Z'))=1$ by the hypothesis, we have
\begin{equation*}
H^0(Y, \sheaf{O_Y}(Z) \oplus \sheaf{O_Y}) = H^0(Y, \sheaf{O_Y}(Z) \oplus \sheaf{O_Y}(Z')).
\end{equation*}
Hence there is an isomorphism by \eqref{equation:H^0(X, f^*(Z))} and \eqref{equation:H^0(PP,S_Z+pi^*Z)}:
\begin{equation}\label{equation:commutative-diagram}
h^{\ast} : H^0(\PP, \sheaf{O_{\PP}}(S_Z + \pi_{Z}^{\ast}{Z})) \xrightarrow{\cong} H^0(X, \sheaf{O_X}(f^{\ast}{Z})).
\end{equation}
Therefore $\linsys{f^{\ast}{Z}} = \{ h^{\ast}{T} : T \in \linsys{S_Z + \pi_{Z}^{\ast}{Z}} \}$.
%
%
\end{proof}

\begin{lemma}[Castelnuovo-Severi inequality, {cf.~ACGH~\cite[VIII,\,Ex.\,C-1]{ACGH-1985}}]\label{lemma:CS-inequality}
Any complete base-point-free pencil on $X$ of degree $\le g_x - 2g_y$ is induced by $f$.
\end{lemma}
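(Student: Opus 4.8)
The plan is to obtain this as a direct consequence of the classical Castelnuovo--Severi inequality. The first step is to replace the pencil by a morphism: a complete base-point-free pencil $\linsys{D}$ of degree $d$ on $X$ is the same datum as a surjective morphism $g=\phi_{\linsys{D}}\colon X\to\PP^1$ of degree $d$, because the members of $\linsys{D}$ are exactly the fibres $g^{\ast}(p)$ with $p\in\PP^1$, each of degree $\deg g$; hence $\deg g=d$ and $\linsys{D}=g^{\ast}\linsys{\sheaf{O_{\PP^1}}(1)}$.

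Next I would apply the Castelnuovo--Severi inequality in the form of ACGH~\cite[VIII,\,Ex.\,C-1]{ACGH-1985}: if $X$ carries surjective morphisms $f_1\colon X\to C_1$ and $f_2\colon X\to C_2$ of degrees $n_1,n_2$ onto smooth curves of genera $g_1,g_2$, and if $f_1$ and $f_2$ do not both factor through a common morphism $X\to W$ of degree $\ge 2$, then $g_x\le n_1g_1+n_2g_2+(n_1-1)(n_2-1)$. Taking $f_1=f$ (so $n_1=2$, $g_1=g_y$) and $f_2=g$ (so $n_2=d$, $g_2=0$), the inequality reads $g_x\le 2g_y+(d-1)$. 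Since $d\le g_x-2g_y$ this would force $g_x\le g_x-1$, which is absurd; therefore $f$ and $g$ must factor through a common morphism $p\colon X\to W$ onto a smooth curve $W$ with $\deg p\ge 2$.

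It then remains to identify $W$ with $Y$. Writing $f=q\circ p$ with $q\colon W\to Y$, we have $2=\deg f=\deg q\cdot\deg p$ with $\deg p\ge 2$, so $\deg p=2$ and $\deg q=1$; thus $q$ is an isomorphism and we may take $W=Y$ and $p=f$. Since $g$ also factors through $p=f$, there is a morphism $c\colon Y\to\PP^1$ of degree $d/2$ with $g=c\circ f$, whence $\linsys{D}=g^{\ast}\linsys{\sheaf{O_{\PP^1}}(1)}=f^{\ast}\bigl(c^{\ast}\linsys{\sheaf{O_{\PP^1}}(1)}\bigr)$. As $c^{\ast}\linsys{\sheaf{O_{\PP^1}}(1)}$ is a base-point-free $g^{1}_{d/2}$ on $Y$, the pencil $\linsys{D}$ is induced by $f$. (In particular $d$ is even, so there are no complete base-point-free pencils of odd degree $\le g_x-2g_y$.)

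The only delicate point is the invocation of the Castelnuovo--Severi inequality itself: one has to use it with the precise ``no common factor'' hypothesis, so that its failure yields exactly a factorization through a degree-$\ge 2$ cover of $X$, and then exploit the primality of $\deg f=2$ to pin that cover down as $Y$. The remainder is routine translation between linear series and morphisms to $\PP^1$.
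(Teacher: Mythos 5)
Your derivation is correct: the paper states this lemma without proof, simply citing the Castelnuovo--Severi inequality from ACGH, and your argument is exactly the standard deduction from that inequality (contradiction forces a common factorization, and $\deg f=2$ pins the intermediate curve down as $Y$, so the pencil is pulled back from a $g^1_{d/2}$ on $Y$). So this matches the paper's (implicit) approach; no issues.
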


\subsection{Elementary transformations}
We recall first the definition of an elementary transformation; cf.~Hartshorne~\cite[V, 5.7.1]{Hartshorne-1977}. Let $P \in \PP$ and let $F = \pi_Z^{-1}(\pi_Z(P))$ be the fiber over $p=\pi_Z(P)$. Let $\blup : \imageofblup{\PP} \to \PP$ be the blowing-up of $\PP$ with center $P$. Let $\imageofblup{F}$ the strict transform of $F$ under $\blup$ and let $\bldown : \imageofblup{\PP} \to \PP'$ be the contraction of $\imageofblup{F}$. Then $\PP'$ is another ruled surface over $Y$. We denote the birational map $\bldown \circ \blup^{-1} : \PP \dashrightarrow \PP'$ by $\elm_P$ and call it \emph{the elementary transform with center $P$}. We denote the surface $\PP'$ by $\elm_P(\PP)$. For $P' = \bldown(\imageofblup{F})$, $\elm_{P'}$ is the inverse of $\elm_{P}$.

Let $P_1, \dotsc, P_n$ be distinct points in $\PP$ such that $\pi_Z(P_i) \neq \pi_Z(P_j)$ for $i \neq j$. We inductively define $\elm_{P_1, \dotsc, P_n}$ by
\begin{equation*}
\elm_{P_1, \dotsc, P_n} = \elm_{P_n} \circ \elm_{P_1, \dotsc,
P_{n-1}},
\end{equation*}
where $P_{n}$ is considered as the point $\elm_{P_1, \dotsc, P_{n-1}}(P_n) \in \elm_{P_1, \dotsc, P_{n-1}}(\PP)$. Note that $\elm_{P, Q} = \elm_{Q, P}$ if $\pi_Z(P) \neq \pi_Z(Q)$.

Let $S$ be a section of the ruled surface $\PP$ and let $E$ be an effective divisor on $S$. We define $\elm_{E}$ as follows: Let $P \in S$. Then the strict transform $\elm_{P}(S)$ of $S$ is a section of $\elm_{P}(\PP)$. We define $\elm_{nP}$ ($n \in \mathbb{N}$) by
\begin{equation*}
\elm_{nP} = \elm_{P_1, P_2, \dotsc, P_n},
\end{equation*}
where $P_1 = P$ and $P_{m+1}$ ($m=1, \dotsc, n-1$) is the unique point in $\elm_{mP}(S) \cap \pi^{-1}(\pi_Z(P))$, where $\pi : \elm_{mP}(\PP) \to Y$ is the projection. Let $E = \sum_{i=1}^{t} n_i P_i$. We define $\elm_{E}$ by $\elm_{n_1 P_1} \circ \dotsb \circ \elm_{n_t P_t}$.

Suppose that $h(X) \lineqv 2S_Z + \pi_Z^{\ast}{G}$ for some $G \in \divisorgroup(Y)$. Let $T \in \linsys{S_Z + \pi_Z^{\ast}{Z}}$ be a section of $\PP$ and let $P \in S_Z \cup T$. Let $\imageofblup{S_Z}$ and $\imageofblup{T}$ be the strict transforms of $S_Z$ and $T$ under the blowing-up $\blup$ with center $P$, respectively. Set $S_Z' = \bldown(\imageofblup{S_Z})$ and $T' = \bldown(\imageofblup{T})$, where $\psi : \widetilde{\PP} \to \PP'$ is the contraction of $\imageofblup{F}$. Since $S_Z \cap T = \varnothing$, we have $S_Z' \cap T' = \varnothing$. Therefore $\PP'$ is decomposable, i.e., $\PP' \cong \PP(\sheaf{O_Y} \oplus \sheaf{O_Y}(-Z'))$ for some $Z' \in \divisorgroup(Y)$; cf.~Hartshorne~\cite[V, Ex.2.2]{Hartshorne-1977}. Let $\imageofblup{h(X)}$ be the strict transform of $h(X)$ under $\blup$. Set $h(X)' = \bldown(\imageofblup{h(X)})$. Let $h' : X \to \elm_{P}(\PP)$ be the extension to $X$ of the map $\elm_{P} \circ h : X \dashrightarrow \elm_{P}(\PP)$. The following two lemmas are quite elementary. However we provide brief proofs for the convenience of the reader.

\begin{lemma}[{Feuntes-Pedreira~\cite{Fuentes-Pedreira-2005}, Seiler~\cite{Seiler-1992}}]\label{lemma:strict-transform-of-X}
Assume that $h(X) \cap \pi_Z^{-1}(p)$ are smooth points of $h(X)$.

\begin{enumerate}[(a)]
\item $S_Z'$ is a minimal degree section $S_{Z'}$ on $\PP(\sheaf{O_Y} \oplus \sheaf{O_Y}(-Z'))$.

\item \label{item:P-in-T-cap-h(X)} If $P \in T \cap h(X)$ and $\deg(Z-p) \ge 0$, then
\begin{align*}
&\PP' \cong \PP(\sheaf{O_Y} \oplus \sheaf{O_Y}(-(Z-p))), \\ %
&T' \lineqv S_{Z-p} + \pi_{Z-p}^{\ast}(Z-p), \\ %
&(h')^{\ast}{T'} = h^{\ast}(T) - P, \\ %
&h(X)' \lineqv 2S_{Z-p} + \pi_{Z-p}^{\ast}(G - p).
\end{align*}

\item \label{item:P-not-in-T-cap-h(X)} If $P \in T$ but $P \not\in h(X)$ and $\deg(Z-p) \ge 0$, then
\begin{align*}
&\PP' \cong \PP(\sheaf{O_Y} \oplus \sheaf{O_Y}(-(Z-p))), \\ %
&T' \lineqv S_{Z-p} + \pi_{Z-p}^{\ast}(Z-p), \\ %
&(h')^{\ast}{T'} = h^{\ast}{T} - P, \\ %
&h(X)' \lineqv 2S_{Z-p} + \pi_{Z-p}^{\ast}(G).
\end{align*}

\item \label{item:P-in-S_Z-cap-h(X)} If $P \in S_Z \cap h(X)$, then
\begin{align*}
&\PP' \cong \PP(\sheaf{O_Y} \oplus \sheaf{O_Y}(-(Z+p))), \\ %
&T' \lineqv S_{Z+p} + \pi_{Z+p}^{\ast}(Z+p), \\ %
&h(X)' \lineqv 2S_{Z+p} + \pi_{Z+p}^{\ast}(G + p).
\end{align*}

\item \label{item:P-in-S_Z-NOT-in-h(X)} If $P \in S_Z$ but $P \not\in h(X)$, then
\begin{align*}
&\PP' \cong \PP(\sheaf{O_Y} \oplus \sheaf{O_Y}(-(Z+p))), \\ %
&T' \lineqv S_{Z+p} + \pi_{Z+p}^{\ast}(Z+p), \\ %
&h(X)' \lineqv 2S_{Z+p} + \pi_{Z+p}^{\ast}(G + 2p).
\end{align*}
Furthermore, $\bldown(\imageofblup{F}) = h(X)' \cap \pi_{Z+p}^{\ast}(p)$ is a singular point of $h(X)'$.
\end{enumerate}
\end{lemma}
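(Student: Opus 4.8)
The plan is to track, fiber by fiber, how a single blow-up/contraction changes the numerical data, and then to read off the five cases from the position of the center $P$ relative to $S_Z$, $T$, and $h(X)$. Since $\pi_Z(P) = p$ lies in a single fiber $F \lineqv \pi_Z^\ast(p)$, everything outside $F$ is unaffected, so I may compute in $\pic(\PP') \cong \mathbb{Z}S_{Z'} \oplus \pi_{Z'}^\ast \pic(Y)$ by comparing pullbacks of divisor classes under $\blup$ and $\bldown$. The basic bookkeeping is: for a curve $C$ on $\PP$ with $\multiplicity_P(C) = m$, its total transform under $\blup$ is $\blup^\ast C = \imageofblup{C} + m E_P$ where $E_P$ is the exceptional divisor; and $\bldown^\ast(\bldown(D)) = D + (D \cdot \imageofblup{F})\, E_P$ for the contraction of $\imageofblup{F}$, where $\imageofblup{F}$ is the strict transform of the fiber. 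Combining these, for any divisor class one gets a clean formula for how its class transforms; in particular $S_Z \cdot F = 0$ and $T \cdot F = 1$ tell us which of $S_Z, T$ meets $\imageofblup{F}$, while the smoothness hypothesis on $h(X) \cap \pi_Z^{-1}(p)$ forces $\multiplicity_P(h(X)) \le 1$.

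First I would handle part (a): since $S_Z$ is a minimal section with $S_Z^2 = -\deg Z$, and elementary transformations change this self-intersection by $\pm 1$ depending on whether $P \in S_Z$ or not, in every case $S_Z'$ remains a section of $\PP'$ whose self-intersection is the minimal one (using $\deg(Z \pm p) \ge 0$ and $g_y \ge 2$, or the explicit $Z' = Z \mp p$ identified below), hence $S_Z' = S_{Z'}$. Next, for cases (b) and (c), $P \in T$ so $T$ passes through the center and $\imageofblup{T}$ is disjoint from $\imageofblup{F}$ (as $T$ meets the fiber transversally at the single point $P$), so $T$ gets "pushed down": $T' \cdot F' = 0$ relative to the new ruling coming from $\bldown(\imageofblup{F})$, which is exactly the statement that $T'$ is the minimal section $S_{Z'}$ of the new surface with $Z' = Z - p$. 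Meanwhile $S_Z$ avoids $P$ so its strict transform meets $\imageofblup{F}$ once and $S_Z'$ is again a section; the two disjoint sections $S_Z'$ and $T'$ then force $\PP' \cong \PP(\sheaf{O_Y} \oplus \sheaf{O_Y}(-(Z-p)))$ with $T' \lineqv S_{Z-p} + \pi_{Z-p}^\ast(Z-p)$, and one computes $h(X)' \lineqv 2S_{Z-p} + \pi_{Z-p}^\ast(G - p)$ from $h(X) \lineqv 2S_Z + \pi_Z^\ast G$ by the transform formula, the coefficient on the fiber dropping by $1$ because $T$ (equivalently $S_Z + \pi_Z^\ast Z$) was "absorbed." The difference between (b) and (c) is whether the strict transform $\imageofblup{h(X)}$ acquires or loses a point over $p$: if $P \in h(X)$ then $\blup$ separates that branch and $(h')^\ast T' = h^\ast T - P$ with the fiber coefficient of $h(X)'$ being $G - p$; if $P \notin h(X)$, then $h(X)$ is disjoint from the center, $\imageofblup{h(X)} = \blup^\ast h(X)$ near $F$, and after contraction the fiber coefficient is $G$ — here I need to be a little careful that $(h')^\ast T' = h^\ast T - P$ still holds, which follows since $h(X)$ meets $T$ only away from $P$ in the relevant fiber (as $P \notin h(X)$) yet meets the new section $T' = S_{Z-p}$ one fewer time overall. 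For cases (d) and (e), $P \in S_Z$, so now $S_Z$ is pushed down: $\imageofblup{S_Z}$ is disjoint from $\imageofblup{F}$, $S_Z'$ becomes the minimal section of $\PP' \cong \PP(\sheaf{O_Y}\oplus\sheaf{O_Y}(-(Z+p)))$, $T$ avoids the center so $T' \lineqv S_{Z+p} + \pi_{Z+p}^\ast(Z+p)$, and the fiber coefficient of $h(X)'$ goes up: by $p$ if $P \in h(X)$ (one branch of $h(X)$ gets separated), giving $G + p$, and by $2p$ if $P \notin h(X)$, giving $G + 2p$, since then the total transform picks up $2E_P$ from the coefficient-$2$ term $2S_Z$ and all of it survives the contraction. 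For the final assertion in (e), $\bldown(\imageofblup{F})$ is a point of $h(X)'$ where two branches of $h(X)'$ come together (the images of the two points of $h(X) \cap F$, which were mapped to $\imageofblup{F}$ after contracting it to a point), so it is a node, hence singular.

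The main obstacle I anticipate is not any single computation but getting the conventions self-consistent: one must fix once and for all whether $\elm_P$ means blow up then contract the strict transform of the fiber, keep straight which surface $S_{Z'}$, $T'$, $h(X)'$ live on after each step, and verify that the pullback identities like $(h')^\ast T' = h^\ast T - P$ hold as \emph{divisors} on $X$ (not merely linear equivalences), which requires checking that $h'$ really does extend the rational map $\elm_P \circ h$ and that the scheme-theoretic fiber works out — this is where the smoothness hypothesis on $h(X)\cap\pi_Z^{-1}(p)$ is essential, as it guarantees $\multiplicity_P h(X)\le 1$ so that $\imageofblup{h(X)}$ is again birational to $X$ with the expected intersection with $\imageofblup{F}$ and $\imageofblup{T}$. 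Once the conventions are pinned down, each of (a)--(e) is a short intersection-theoretic verification on the rational ruled surface, with the two disjoint sections in each case certifying decomposability via Hartshorne V, Ex.\,2.2 as already invoked in the text.
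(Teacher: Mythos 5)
Your overall plan (do the bookkeeping of total transforms under $\blup$ and $\bldown$ directly, instead of quoting Fuentes--Pedreira 4.3/4.4/4.6/4.12 and Seiler as the paper does) is a legitimate, more self-contained route. But the execution has a genuine error exactly at the step that carries the content of the lemma. In cases (b)/(c) you claim that $\imageofblup{T}\cap\imageofblup{F}=\varnothing$ gives ``$T'\cdot F'=0$'' and hence that $T'$ \emph{is the minimal section} $S_{Z'}$ of the new surface. This cannot be right: $T'$ is still a section of $\pi_{Z'}:\PP'\to Y$, so it meets every fiber exactly once (a curve meeting fibers zero times would be a fiber component, not a section), and the section whose self-intersection drops under $\elm_P$ is the one \emph{through} the center. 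Since $P\in T$ and $P\notin S_Z$, one gets $(T')^2=T^2-1=\deg(Z-p)\ge 0$ while $(S_Z')^2=S_Z^2+1=-\deg(Z-p)$; so $S_Z'$ is the minimal section (part (a)) and $T'$ is the non-minimal disjoint one, consistent with $T'\lineqv S_{Z-p}+\pi_{Z-p}^{\ast}(Z-p)$ — the very formula you are trying to prove, which your intermediate claim contradicts. Because of this, your argument never actually establishes the precise classes: ``two disjoint sections force $\PP'\cong\PP(\sheaf{O}_Y\oplus\sheaf{O}_Y(-(Z-p)))$'' only yields decomposability (Hartshorne V, Ex.~2.2) and the degree of $Z'$, not the divisor class $Z\mp p$ itself, nor the class of $T'$. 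To repair it you must carry out the bookkeeping you announced in the first paragraph: on $\imageofblup{\PP}$ use $\blup^{\ast}T=\imageofblup{T}+E_P$, $\blup^{\ast}F=\imageofblup{F}+E_P$, $\blup^{\ast}S_Z=\imageofblup{S_Z}$, together with $\bldown^{\ast}T'=\imageofblup{T}$ and $\bldown^{\ast}S_Z'=\imageofblup{S_Z}+\imageofblup{F}$ (case $P\in T$), to get $T'-S_Z'\lineqv\pi_{Z'}^{\ast}(Z-p)$ and likewise $\pi_{Z'}^{\ast}(Z+p)$ when $P\in S_Z$; this is exactly what the paper imports from Fuentes--Pedreira.

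The same misidentification infects your treatment of $(h')^{\ast}T'=h^{\ast}T-P$ in case (c): you justify it by ``meets the new section $T'=S_{Z-p}$ one fewer time overall,'' but with the correct classes $T'\lineqv S_{Z-p}+\pi_{Z-p}^{\ast}(Z-p)$ and $h(X)'\lineqv 2S_{Z-p}+\pi_{Z-p}^{\ast}G$ the intersection number does not drop, so that degree count is not a proof (the paper's own proof silently omits these pullback identities, but your stated justification as written is unsound). Case (b) and the final assertion in (e) are essentially fine, though in (e) you should allow the two points of $h(X)\cap F$ to coincide (tangency over a branch point of $f$); the clean statement is that the multiplicity of $h(X)'$ at $\bldown(\imageofblup{F})$ equals $\imageofblup{h(X)}\cdot\imageofblup{F}=2$, hence the point is singular in all cases.
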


\begin{proof}
By Seiler~\cite[Lemma~6]{Seiler-1992}, $S_Z'$ is a minimal degree section of $\PP'$. It follows by Fuentes-Pedreira~\cite[4.12]{Fuentes-Pedreira-2005} that
\begin{equation*}
\PP' \cong
\begin{cases}
\PP(\sheaf{O_Y} \oplus \sheaf{O_Y}(-(Z-p))) & \text{if $P \in T$}, \\ %
\PP(\sheaf{O_Y} \oplus \sheaf{O_Y}(-(Z+p))) & \text{if $P \in S_Z$}.
\end{cases}
\end{equation*}
By Fuentes-Pedreira~\cite[4.6]{Fuentes-Pedreira-2005}, we have $T' \lineqv S_Z' + \pi_{Z'}^{\ast}(Z + (1-2\mu_P(T)p))$, where $\mu_{P}$ is the multiplicity of $T$ at $P$. Therefore
\begin{equation*}
T' \lineqv
\begin{cases}
S_{Z-p} + \pi_{Z-p}^{\ast}(Z-p) & \text{if $P \in T$}, \\ %
S_{Z+p} + \pi_{Z+p}^{\ast}(Z+p) & \text{if $P \not\in T$, i.e, $P \in S_Z$}.
\end{cases}
\end{equation*}

Let $\phi=\blup \circ \bldown^{-1} : \PP' \dashrightarrow \PP$ be the inverse of $\elm_P$. By Fuentes-Pedreira~\cite[4.4]{Fuentes-Pedreira-2005}, we have
\begin{equation}\label{equation:phi^*(h(X))-I}
\phi^{\ast}(h(X)) = h(X)' + \mu_{P}(h(X))\pi_{Z'}^{\ast}(p).
\end{equation}
On the other hand, for any $y \in Y$, $\phi^{\ast}(\pi_{Z}^{\ast}{y}) = \pi_{Z'}^{\ast}{y}$; Fuentes-Pedreira~\cite[4.3]{Fuentes-Pedreira-2005}. Since $h(X) \lineqv 2S_Z + \pi_Z^{\ast}{G}$, it follows that
\begin{equation}\label{equation:phi^*(h(X))-II}
\phi^{\ast}(h(X)) \lineqv 2\phi^{\ast}{S_Z} + \phi^{\ast}{\pi_{Z}^{\ast}{G}} \lineqv 2\phi^{\ast}{S_Z} + \pi_{Z'}^{\ast}{G}.
\end{equation}
Note that
\begin{equation*}
\phi^{\ast}{S_Z} \lineqv
\begin{cases}
S_{Z'} + \pi_{Z'}^{\ast}{p} & \text{if $P \in S_Z$}, \\ %
S_{Z'} & \text{if $P \not \in S_Z$, i.e., $P \in T$}.
\end{cases}
\end{equation*}
Therefore it follows by \eqref{equation:phi^*(h(X))-I} and \eqref{equation:phi^*(h(X))-II} that
\begin{equation*}
h(X)' \lineqv
\begin{cases}
2S_{Z'} + \pi_{Z'}^{\ast}(G - p) & \text{if $P \in T \cap h(X)$}, \\ %
2S_{Z'} + \pi_{Z'}^{\ast}(G + p) & \text{if $P \in S_Z \cap h(X)$}, \\ %
2S_{Z'} + \pi_{Z'}^{\ast}(G + 2p) & \text{if $P \in S_Z$ but$P \not\in h(X)$}.
\end{cases} \qedhere
\end{equation*}
\end{proof}

Let $P \in h(X)$ which is not a singular point of $h(X)$. Let $h' : X \to \elm_{P}(\PP)$ be the extension to $X$ of the map $\elm_{P} \circ h : X \dashrightarrow \elm_{P}(\PP)$.

\begin{lemma}\label{lemma:sigma^*(P)<=h^*(X)}
If $S$ is a section on $\PP$ such that $P \not\in S$, then $\sigma^{\ast}{P} \le h'^{\ast}{S'}$, where $\sigma : X \to X$ is an involution induced by $f$.
\end{lemma}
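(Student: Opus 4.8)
The plan is to track the image of the conjugate point $\sigma(P)$ under $h'$ and to check that it lies on the section $S'$. Write $p=\pi_Z(P)$ and let $F=\pi_Z^{-1}(p)$ be the fibre of $\pi_Z$ through $P$. Since $h$ is birational onto its image and $P$ is a smooth point of $h(X)$, the set $h^{-1}(P)$ is a single point, which I identify with $P\in X$ as the statement implicitly does; in particular $\sigma^{\ast}P=\sigma(P)$ is a reduced point. I will use two elementary features of $\elm_P=\bldown\circ\blup^{-1}$, both immediate from the blow-up/contraction description recalled above. First, $\elm_P$ contracts $F\setminus\{P\}$ to the single point $P':=\bldown(\imageofblup{F})$, so that for $Q\in F$ with $Q\ne P$ the map $\elm_P$ is a morphism at $Q$ with $\elm_P(Q)=P'$. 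Second, since $P\notin S$ the blow-up $\blup$ is an isomorphism in a neighbourhood of $S$, so $\imageofblup{S}$ meets $\imageofblup{F}$ at the single point lying over $S\cap F$, and contracting $\imageofblup{F}$ forces $S'=\bldown(\imageofblup{S})$ to pass through $P'$.

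To locate $\sigma(P)$, note that $f(\sigma(P))=f(P)=p$, hence $\pi_Z(h(\sigma(P)))=p$, i.e.\ $h(\sigma(P))\in F$. If $h(\sigma(P))\ne P$, then by the first feature $\elm_P\circ h$ is a morphism at $\sigma(P)$, so $h'(\sigma(P))=\elm_P(h(\sigma(P)))=P'$. If instead $h(\sigma(P))=P$, then $\sigma(P)=P$ because $h^{-1}(P)$ is a single point; then $p$ is a branch point of $f$, so $h^{\ast}F=f^{\ast}(p)=2P$, and since $P$ is a smooth point of $h(X)$ this forces $h(X)$ to be tangent to $F$ at $P$. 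Consequently the strict transforms $\imageofblup{h(X)}$ and $\imageofblup{F}$ still meet, necessarily at the point $\hat{h}(P)\in\imageofblup{F}$, where $\hat{h}:X\to\imageofblup{\PP}$ is the lift of $h$ through $\blup$; contracting $\imageofblup{F}$ then gives $h'(P)=P'$. In either case $h'(\sigma(P))=P'$.

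Combining this with the second feature we get $h'(\sigma(P))=P'\in S'$, so $\sigma(P)$ lies in the support of the effective divisor $h'^{\ast}(S')$, which is precisely the assertion $\sigma^{\ast}P=\sigma(P)\le h'^{\ast}(S')$. I do not expect a genuinely hard step here: everything reduces to unwinding the definition of $\elm_P$, and the only point requiring care is the branch case $h(\sigma(P))=P$, where one must argue from $h^{\ast}F=2P$ and the smoothness of $P$ on $h(X)$ that $h(X)$ is tangent to the fibre, so that the extension of $\elm_P\circ h$ across $P$ takes the value $P'$ rather than another point of the fibre of $\PP'$ over $p$.
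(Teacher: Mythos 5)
Your proof is correct and follows essentially the same route as the paper: both arguments simply unwind $\elm_P=\bldown\circ\blup^{-1}$, noting that since $P\notin S$ the contracted point $\bldown(\imageofblup{F})$ lies on $S'$, and that the image of $\sigma^{\ast}P$ under $h'$ is exactly that point. In fact you are slightly more careful than the paper, which tacitly assumes $F\cap h(X)=\{P,\sigma^{\ast}P\}$ consists of two points, whereas you also treat the ramification case $\sigma(P)=P$ via the tangency of $h(X)$ and the fibre.
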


\begin{proof}
Let $\imageofblup{F}$, $\imageofblup{h(X)}$, and $\imageofblup{S}$ be the strict transforms of $F = \pi_M^{-1}(\pi_M(P))$, $h(X)$, and $S$ under the blowing-up $\blup$ at $P$, respectively. Set $h(X)' = \bldown(\imageofblup{h(X)})$ and $S'=\bldown(\imageofblup{S})$. Clearly, $F \cap h(X) = \{P, \sigma^{\ast}{P}\}$ and $F.S=1$. Since $P \in F \cap h(X)$ but $P \not\in S$ by the assumption, we have $\imageofblup{F} \cap \imageofblup{h(X)} = \{\blup^{-1}(\sigma^{\ast}{P})\}$ and $\imageofblup{F}.\imageofblup{S}=1$. Since  $\bldown(\imageofblup{F}) = \sigma^{\ast}{P}$, we have $\sigma^{\ast}{P} \in h(X)'$ and $\sigma^{\ast}{P} \in S'$. Therefore $\sigma^{\ast}{P} \le h'^{\ast}{S}$.
\end{proof}


\section{Primitive pencils on double covers}\label{sec:primitive}

In this section, we prove that certain base-point-free pencils on $X$ are primitive; Proposition~\ref{proposition:primitive}. We then get a lower bound for $s_X(2)$; Theorem~\ref{theorem:lower-bound}

Let $\linsys{D}$ be a base-point-free pencil of degree $d \le g_x - 1$ on $X$ which is not induced by $f$. In order to verify the primitiveness of $\linsys{D}$, we map $X$ into a certain ruled surface:

\begin{lemma}\label{lemma:j_M-construction}
There exist an effective divisor $M \in \divisorgroup(Y)$ of degree $d$ and a morphism
\begin{equation*}
j_M : X \to \PP(\sheaf{O_Y} \oplus \sheaf{O_Y}(-M))
\end{equation*}
which is birational onto its image such that $M \lineqv f_{\ast}{D}$, $\pi_M \circ j_M = f$, and $j_M(X) \lineqv 2S_M + \pi_M^{\ast}(2M)$.
\end{lemma}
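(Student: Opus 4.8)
The plan is to build the divisor $M$ and the map $j_M$ directly from the linear series $\linsys{D}$, using the structure theory of double covers recalled in \S2. First, I would set $M = f_{\ast}{D}$; since $\linsys{D}$ is base-point-free and not induced by $f$, the two points of a general divisor in $\linsys{D}$ lie over distinct points of $Y$, so $f_{\ast}{D}$ is a genuine effective divisor of degree $d$ on $Y$ (a general divisor $D' \in \linsys D$ pushes forward to a reduced divisor of degree $d$, and linear equivalence of the pushforwards is automatic). The key point is then to produce a section of the ruled surface $\PP := \PP(\sheaf{O_Y} \oplus \sheaf{O_Y}(-M))$ through which $X$ maps appropriately.

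The main construction is as follows. Consider the rank-two bundle $f_{\ast}{\sheaf{O_X}(D)}$ on $Y$. By the projection formula together with \eqref{equation:f_*(O_X)}, this is $\sheaf{O_Y}(f_{\ast}D)$ up to the twist structure coming from $\sheaf{O_Y} \oplus \sheaf{O_Y}(-B)$; more precisely, one obtains an inclusion $f_{\ast}\sheaf{O_X} \hookrightarrow f_{\ast}\sheaf{O_X}(D)$ that, after tensoring suitably, exhibits $f_{\ast}\sheaf{O_X}(D)$ as an extension whose associated ruled surface is (a twist of) $\PP(\sheaf{O_Y} \oplus \sheaf{O_Y}(-M))$ with $M \lineqv f_{\ast}D$. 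The surjection $f^{\ast}f_{\ast}\sheaf{O_X}(D) \to \sheaf{O_X}(D)$ then defines a morphism $j_M : X \to \PP(f_{\ast}\sheaf{O_X}(D)) \cong \PP$ over $Y$, i.e.\ with $\pi_M \circ j_M = f$, such that $j_M^{\ast}\sheaf{O}_{\PP}(1)$ recovers $\sheaf{O_X}(D)$ twisted by a pullback from $Y$. Birationality onto the image is forced because $f$ has degree $2$: if $j_M$ were $2$-to-$1$, it would factor through $f$ and $\linsys D$ would be (a subseries of something) induced by $f$, contradicting the hypothesis; since $d \le g_x - 1$ the relevant dimension count rules out the remaining degenerate possibility, so $h$ separates the two points of a general fiber of $f$.

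It remains to compute the class of $j_M(X)$ in $\pic(\PP)$. Write $j_M(X) \lineqv 2S_M + \pi_M^{\ast}{W}$ for some $W \in \divisorgroup(Y)$ (the coefficient of $S_M$ is the fiber degree, which is $\deg f = 2$). To pin down $W$, I would intersect with $S_M$ and with a fiber, and use the adjunction/Hurwitz computation \eqref{equation:K_X-f^*K_Y+f^*B} on $\PP$: comparing $K_X \lineqv f^{\ast}K_Y + f^{\ast}B$ with the adjunction formula $K_X \lineqv (K_{\PP} + j_M(X))|_{j_M(X)}$, together with $K_{\PP} \lineqv -2S_M + \pi_M^{\ast}(K_Y - M)$ and $\deg B = g_x - 2g_y + 1$, forces $W \lineqv 2M$. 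Equivalently, one checks $j_M(X) \cdot S_M$ and the arithmetic genus of $2S_M + \pi_M^{\ast}W$ via the genus formula and solves for $W$; both routes are short.

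The step I expect to be the main obstacle is the birationality of $j_M$ onto its image, i.e.\ ruling out that $j_M$ collapses the two points of a general fiber of $f$. This is exactly where the hypothesis ``$\linsys D$ is not induced by $f$'' must be used, and one has to argue carefully that a factorization $j_M = (\text{embedding}) \circ f$ would force $\linsys D$ (or the complete series it spans) to be pulled back from $Y$; the bound $d \le g_x - 1$ is what guarantees $h^0(X, \sheaf{O_X}(D)) = 2$ is not accidentally inflated by base-point contributions from $Y$. Everything else — the identification of $M$ with $f_{\ast}D$ and the divisor-class computation for $j_M(X)$ — is a routine application of the projection formula, \eqref{equation:H^0(X, f^*(M))}, and intersection theory on the ruled surface.
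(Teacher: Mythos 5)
Your construction does not establish the lemma as stated, because the surface you actually land in is the wrong one. The relative evaluation map attached to the surjection $f^{\ast}f_{\ast}\sheaf{O_X}(D)\to\sheaf{O_X}(D)$ maps $X$ into $\PP(f_{\ast}\sheaf{O_X}(D))$, and your identification of this bundle with (a twist of) $\sheaf{O_Y}\oplus\sheaf{O_Y}(-M)$, $M\lineqv f_{\ast}D$, is unjustified and in general false: $f_{\ast}\sheaf{O_X}(D)$ need not be decomposable, and even its numerical type is wrong. Indeed $\PP(\sheaf{E})\cong\PP(\sheaf{E}')$ forces $\sheaf{E}'\cong\sheaf{E}\otimes\sheaf{N}$, so the parity of $\deg\det$ is an invariant of the $\PP^1$-bundle; since $\det f_{\ast}\sheaf{O_X}(D)\cong\sheaf{O_Y}(f_{\ast}D-B)$ has degree $d-\deg B$ while $\det(\sheaf{O_Y}\oplus\sheaf{O_Y}(-M))$ has degree $-d$, the two bundles cannot be projectively equivalent whenever $\deg B=g_x-2g_y+1$ is odd, i.e.\ whenever $g_x$ is even. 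Relatedly, your map is automatically a closed embedding (it embeds each length-two fiber of $f$ into the corresponding $\PP^1$, with no use of the hypothesis that $\linsys{D}$ is not induced by $f$), and an embedding is incompatible with the asserted class: if $X$ were embedded in $\PP(\sheaf{O_Y}\oplus\sheaf{O_Y}(-M))$ with class $2S_M+\pi_M^{\ast}W$, then adjunction together with \eqref{equation:K_X-f^*K_Y+f^*B} gives $\deg W=d+\deg B$, which equals $2d$ only when $d=g_x-2g_y+1$; by Castelnuovo--Severi (Lemma~\ref{lemma:CS-inequality}) one typically has $d>g_x-2g_y+1$, and then $j_M(X)\lineqv 2S_M+\pi_M^{\ast}(2M)$ is possible only because $j_M(X)$ is a \emph{singular} model, with conductor of degree $2(d-\deg B)>0$. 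This also shows your second route to $W\lineqv 2M$ is circular: the generalized adjunction formula pins down $W$ only if you already know the conductor, and the conductor $\Delta\lineqv f^{\ast}(M-B)$ is exactly Lemma~\ref{lemma:j_M-conductor}, whose proof uses the class asserted in the present lemma.

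The missing idea is the paper's: since $\linsys{D}$ is not induced by $f$, the product map $j_0=\phi_{\linsys{D}}\times f:X\to\PP^1\times Y=\PP(\sheaf{O_Y}\oplus\sheaf{O_Y})$ is birational onto its image $X_0\lineqv 2S_0+\pi_0^{\ast}M$, where $D_0=j_0^{\ast}S_0\in\linsys{D}$ lies on the section $S_0$ and $M=f_{\ast}D_0$ (an effective divisor of degree $d$ with $M\lineqv f_{\ast}D$). One then applies the elementary transformations $\elm_{D_0}$ centered at the points of $D_0\subset S_0\cap X_0$: by Lemma~\ref{lemma:strict-transform-of-X}(\ref{item:P-in-S_Z-cap-h(X)}), each such transformation keeps the surface decomposable, replaces $Z$ by $Z+p$, and replaces the image class $2S_Z+\pi_Z^{\ast}G$ by $2S_{Z+p}+\pi_{Z+p}^{\ast}(G+p)$, so after transforming along all of $D_0$ one obtains exactly $\PP(\sheaf{O_Y}\oplus\sheaf{O_Y}(-M))$, a morphism $j_M$ with $\pi_M\circ j_M=f$, and $j_M(X)\lineqv 2S_M+\pi_M^{\ast}(2M)$. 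The singularities thereby created in the image are a feature of the statement, not something to be avoided by passing to the canonical embedding in $\PP(f_{\ast}\sheaf{O_X}(D))$.
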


\begin{proof}
Let $\phi_{\linsys{D}} : X \to \PP^1$ be the morphism associated with $\linsys{D}$. Since $\linsys{D}$ is not induced by $f$, the morphism
\begin{equation*}
j_0 = \phi_{\linsys{D}} \times f : X \to \PP^1 \times Y = \PP(\sheaf{O_Y}
\oplus \sheaf{O_Y})
\end{equation*}
is birational onto its image. Let $S_0 = \{a\} \times Y$ ($a \in \PP^1$) be a minimal degree section of $\PP^1 \times Y$, i.e., a fiber of the first projection $\PP^1 \times Y \to \PP^1$. Set $D_0 = j_0^{\ast}{S_0}$. We may assume that $D_0$ consists of distinct points and $j_0(\supp(D_0))$ does not contain any singular points of $X_0 := j_0(X)$. Set $M = f_{\ast}{D_0}$. Then
\begin{equation*}
X_0 \lineqv 2S_0 + \pi_0^{\ast}{M}.
\end{equation*}

Since $D_0$ is cut out on $X_0$ by a section $S_0$, we may regard $D_0$ as a divisor of $S_0$. We apply $\elm_{D_0}$ to $\PP^1 \times Y$. Since $f_{\ast}{D_0} = M$, it follows by Lemma~\ref{lemma:strict-transform-of-X}(\ref{item:P-in-S_Z-cap-h(X)}) that
\begin{equation*}
\elm_{D_0}(\PP^1 \times Y) \cong \PP(\sheaf{O_Y} \oplus \sheaf{O_Y}(-M)).
\end{equation*}
Let $j_M : X \to \PP(\sheaf{O_Y} \oplus \sheaf{O_Y}(-M))$ be the extension to $X$ of the map $\elm_{D_0} \circ j_0 : X \dashrightarrow \PP(\sheaf{O_Y} \oplus \sheaf{O_Y}(-M))$. It is clear that $j_M$ is birational onto its image and $\pi_M \circ j_M = f$. Since $\supp(D_0) \subset X_0 \cap S_0$ and $X_0 \lineqv 2S_0 + \pi_0^{\ast}{M}$,  we have
\begin{equation*}
j_M(X) \lineqv 2S_M + \pi_M^{\ast}(2M)
\end{equation*}
by Lemma~\ref{lemma:strict-transform-of-X}(\ref{item:P-in-S_Z-cap-h(X)}).
\end{proof}

Throughout this section, let $j_M : X \to \PP(\sheaf{O_Y} \oplus \sheaf{O_Y}(-M))$ be the morphism associated with $\linsys{D}$ given by Lemma~\ref{lemma:j_M-construction}. Set $\PP = \PP(\sheaf{O_Y} \oplus \sheaf{O_Y}(-M))$. The conductor $\Delta$ of the birational morphism $j_M : X \to j_M(X)$ is a pull-back of an effective divisor of $Y$ via $f^{\ast}$:

\begin{lemma}\label{lemma:j_M-conductor}
If $g_x \ge 4g_y - 2$, then $\Delta = f^{\ast}{N}$ for some $N \in \linsys{M-B}$.
\end{lemma}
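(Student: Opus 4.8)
The plan is to identify the conductor $\Delta$ of the birational morphism $j_M : X \to j_M(X)$ inside the ruled surface $\PP = \PP(\sheaf{O_Y}\oplus\sheaf{O_Y}(-M))$ by means of the generalized adjunction formula on $\PP$, and then to force $\Delta$ to be $f$-invariant by showing that its $f$-conjugate coincides with it. First I would write down the adjunction relation: since $j_M(X)\lineqv 2S_M+\pi_M^{\ast}(2M)$ and $K_{\PP}\lineqv -2S_M+\pi_M^{\ast}(K_Y-M)$, the standard formula $K_{j_M(X)} = (K_{\PP}+j_M(X))\big|_{j_M(X)} - \Delta$ (where by abuse $\Delta$ is pushed to $j_M(X)$) gives, after pulling back to $X$ via $j_M$, an expression of the form $K_X \lineqv j_M^{\ast}\big((K_{\PP}+j_M(X))\big|_{j_M(X)}\big) - \Delta$. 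Combining $K_{\PP}+j_M(X)\lineqv \pi_M^{\ast}(K_Y+M)$ with $\pi_M\circ j_M=f$ yields $K_X\lineqv f^{\ast}(K_Y+M)-\Delta$. Comparing this with the Hurwitz relation \eqref{equation:K_X-f^*K_Y+f^*B}, namely $K_X\lineqv f^{\ast}K_Y+f^{\ast}B$, we obtain $\Delta\lineqv f^{\ast}(M-B)$ as divisor classes on $X$.

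Next I would upgrade this linear equivalence to the statement that $\Delta$ is actually an effective pull-back $f^{\ast}N$ with $N\in\linsys{M-B}$, as claimed. The key point here is that $\Delta$ is an \emph{effective} divisor of degree $2\deg(M-B)=2(d-g_x+2g_y-1)$ (using \eqref{equation:deg(B)}), and it is supported exactly on the preimages of the singular points of $j_M(X)$; since $\pi_M\circ j_M=f$, these singular points lie over points of $Y$, and the involution $\sigma$ induced by $f$ permutes the fibers of $f$, hence permutes the singular locus of $j_M(X)$ and thus fixes $\Delta$ set-theoretically. To get $\sigma^{\ast}\Delta=\Delta$ as divisors (not merely as sets) I would argue that $j_M(X)$ being $\sigma$-invariant as a subscheme of $\PP$ (because it is the image of the $\sigma$-equivariant map), its conductor scheme is $\sigma$-invariant too; therefore $\Delta=\sigma^{\ast}\Delta$, and a $\sigma$-invariant effective divisor on $X$ is of the form $f^{\ast}N$ for a unique effective $N$ on $Y$. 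Then $2N=f_{\ast}\Delta \lineqv f_{\ast}f^{\ast}(M-B) = 2(M-B)$ forces $N\lineqv M-B$ by torsion-freeness of $\pic(Y)$ modulo $2$-divisibility — or, more cleanly, since $f^{\ast}N\lineqv f^{\ast}(M-B)$ and $f^{\ast}:\pic(Y)\to\pic(X)$ is injective (as $g_y\ge 2$ implies $f$ is not a pull-back from $\PP^1$, so there is no nontrivial $2$-torsion kernel issue; one invokes the standard fact that $f^*$ is injective for a genuine double cover), we conclude $N\lineqv M-B$.

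The main obstacle I anticipate is the injectivity-and-invariance bookkeeping in the middle step: one must be careful that the adjunction formula is being applied on the (possibly singular) curve $j_M(X)\subset\PP$ correctly, i.e. that the $\delta$-invariant / conductor really produces $\Delta$ as a divisor on the normalization $X$ with $j_M^{\ast}\mathcal{O}(K_{\PP}+j_M(X))(-\Delta)\cong\omega_X$, and that $\Delta$ is nonzero only where $j_M$ fails to be an isomorphism. The hypothesis $g_x\ge 4g_y-2$ is presumably exactly what guarantees $\deg(M-B)\ge 0$ when $d$ ranges over the relevant values $d\le g_x-1$ with $\linsys{D}$ not induced by $f$ — via Castelnuovo–Severi (Lemma~\ref{lemma:CS-inequality}) such a $\linsys{D}$ has $d\ge g_x-2g_y+1=\deg B$, so $\deg(M-B)=d-\deg B\ge 0$ — so effectivity of $N$ is automatic once we know $\Delta$ is effective and $\sigma$-invariant. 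Assembling these pieces: $\Delta$ effective $+$ $\sigma$-invariant $\Rightarrow$ $\Delta=f^{\ast}N$ with $N$ effective; adjunction $+$ Hurwitz $\Rightarrow$ $f^{\ast}N\lineqv f^{\ast}(M-B)$; injectivity of $f^{\ast}$ $\Rightarrow$ $N\in\linsys{M-B}$, which is the assertion.
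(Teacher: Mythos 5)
Your first step coincides with the paper's: the generalized adjunction formula on $\PP$ together with $j_M(X)\lineqv 2S_M+\pi_M^{\ast}(2M)$ and the Hurwitz relation gives $\Delta\lineqv f^{\ast}(M-B)$. The gap is in how you pass from this linear equivalence to $\Delta=f^{\ast}N$. The principle you invoke, that a $\sigma$-invariant effective divisor on $X$ is of the form $f^{\ast}N$, is false as stated: a ramification point $P$ of $f$ satisfies $\sigma^{\ast}P=P$, yet $P$ is not a pull-back (only $2P=f^{\ast}f(P)$ is); so you would also need $\Delta$ to have \emph{even} multiplicity at every ramification point, a parity issue you never address. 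Moreover your justification of the invariance itself does not typecheck: $\PP$ carries no involution for which $j_M$ is equivariant, so ``$j_M(X)$ is a $\sigma$-invariant subscheme of $\PP$'' has no meaning. What is true, and would have to be proved by a local analysis you do not supply, is the following: since $\pi_M\circ j_M=f$ has degree $2$, a singular point of $j_M(X)$ has at most two preimages on $X$, and if two they are $\sigma$-conjugate; at such a two-branch point both branches are smooth (each is unramified over $Y$) and meet with one and the same intersection multiplicity, so the conductor is symmetric there; at a unibranch singular point the unique preimage is fixed by $\sigma$ and the conductor multiplicity equals $2\delta_P$, which is even because plane-curve singularities are Gorenstein. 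Without these verifications the middle of your argument is unsupported, even though the statements happen to be correct.

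The paper avoids all of this with a short cohomological argument, and this is also where your reading of the hypothesis goes astray: $g_x\ge 4g_y-2$ is not needed to get $\deg(M-B)\ge 0$ (Castelnuovo--Severi already forces $d\ge g_x-2g_y+1=\deg B$); it is used, together with the standing assumption $d\le g_x-1$, to get $\deg(M-2B)\le -g_x+4g_y-3<0$, hence $h^0(Y,\sheaf{O_Y}(M-2B))=0$. Then \eqref{equation:H^0(X, f^*(M))} gives $h^0(X,\sheaf{O_X}(f^{\ast}(M-B)))=h^0(Y,\sheaf{O_Y}(M-B))$, i.e. $\linsys{f^{\ast}(M-B)}=f^{\ast}\linsys{M-B}$, and since $\Delta\in\linsys{f^{\ast}(M-B)}$ one concludes $\Delta=f^{\ast}N$ with $N\in\linsys{M-B}$ directly. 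This replaces both your invariance/parity analysis and your appeal to injectivity of $f^{\ast}$ on $\pic(Y)$ (which is true here because the covering is ramified, $\deg B=g_x-2g_y+1>0$, but not for the reason you give).
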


\begin{proof}
Let $X_M = j_M(X)$ for convenience. By the generalized adjunction formula (cf.~Iitaka~\cite[p.205]{Iitaka-1982}), we have
\begin{equation}\label{equation:K_X-gen.adj.form.}
K_X \lineqv j_M^{\ast}((K_{\PP} + X_M)|_{X_M}) - \Delta.
\end{equation}
Since $K_{\PP} \lineqv -2S_M + \pi_M^{\ast}(-M + K_Y)$ by Hartshorne~\cite[V, 2.10]{Hartshorne-1977} and $X_M \lineqv 2S_M + \pi_M^{\ast}(2M)$ by the hypothesis, it follows by \eqref{equation:K_X-gen.adj.form.} that
\begin{equation}\label{equation:K_X=f^*(M+K_Y)-Delta}
K_X \lineqv f^{\ast}(M + K_Y) - \Delta.
\end{equation}
On the other hand, by \eqref{equation:K_X-f^*K_Y+f^*B}, $K_X \lineqv f^{\ast}{K_Y} + f^{\ast}{B}$; hence, it follows by \eqref{equation:K_X=f^*(M+K_Y)-Delta} that
\begin{equation}\label{equation:Delta-f^*(M-B)}
\Delta \lineqv f^{\ast}(M-B),
\end{equation}

Since $d \le g_x - 1$ and $g_x \ge 4g_y - 2$ by the assumptions, we have
\begin{equation*}
\deg(M-2B) \le (g_x - 1) - 2(g_x - 2g_y + 1) = -g_x + 4g_y - 3 < 0.
\end{equation*}
Hence it follows by \eqref{equation:H^0(X, f^*(M))} that
\begin{equation*}
\begin{split}
H^0(X, \sheaf{O_X}(f^{\ast}(M-B)))
&\cong H^0(Y, \sheaf{O_Y}(M-B) \oplus \sheaf{O_Y}(M-2B)) \\ %
&\cong H^0(Y, \sheaf{O_Y}(M-B)).
\end{split}
\end{equation*}
Therefore
\begin{equation}\label{equation:|f^*(M-B)|=f^*|M-B|}
\linsys{f^{\ast}(M-B)} = f^{\ast}{\linsys{M-B}}.
\end{equation}
Since $\Delta \in \linsys{f^{\ast}(M-B)}$ by \eqref{equation:Delta-f^*(M-B)}, we have
\begin{equation*}
\Delta \in f^{\ast}{\linsys{M-B}}
\end{equation*}
by \eqref{equation:|f^*(M-B)|=f^*|M-B|}. Therefore  $\Delta = f^{\ast}{N}$ for some $N \in \linsys{M-B}$.
\end{proof}

There is a special configuration of divisors on $X$ induced by a section of $\PP$:

\begin{lemma}\label{lemma:j_M-configuration}
Assume that $g_x \ge 4g_y - 2$. If $d \le g_x - 2g_y - 1 + \gon(Y)$, then there exist an effective divisor $D' \in \divisorgroup{X}$ and a section $T$ of $\PP(\sheaf{O_Y} \oplus \sheaf{O_Y}(-M))$ satisfying the followings:
\begin{enumerate}[(a)]
\item \label{item:supp(f_*(D))-cap-supp(f_*(D'))=0} $\supp(f_{\ast}{D}) \cap \supp(f_{\ast}{D'}) = \varnothing$, i.e., no two points of $\supp(D)$ and $\supp(D')$ lie over the same fiber.

\item \label{item:f_*(D)-in-|M|} $f_{\ast}{D}, f_{\ast}{D'} \in \linsys{M}$,

\item \label{item:j_M^*(T)=D+D'} $j_M^{\ast}{T} = D + D' \in \linsys{f^{\ast}{M}}$.
\end{enumerate}
\end{lemma}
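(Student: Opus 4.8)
The plan is to produce $D+D'$ as a divisor in $\linsys{f^{\ast}M}$ of the special form \emph{(a member of the pencil $\linsys{D}$)} plus \emph{(the $\sigma$-conjugate of a member of $\linsys{D}$)}, where $\sigma$ is the involution on $X$ induced by $f$, and to obtain $T$ from the identification of $\linsys{f^{\ast}M}$ with the linear system of sections of $\PP$ in the class $S_M+\pi_M^{\ast}M$ supplied by Lemma~\ref{lemma:|f^*Z|=|h^*T|}. The two numerical hypotheses are precisely what make that identification available.

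First I would check that Lemma~\ref{lemma:|f^*Z|=|h^*T|} is applicable with $Z=M$. We already have $j_M(X)\lineqv 2S_M+\pi_M^{\ast}(2M)$ by Lemma~\ref{lemma:j_M-construction}. By Lemma~\ref{lemma:j_M-conductor}, which uses $g_x\ge 4g_y-2$, the class $M-B$ is effective, and by the hypothesis $d\le g_x-2g_y-1+\gon(Y)$ together with \eqref{equation:deg(B)},
\[
\deg(M-B)=d-(g_x-2g_y+1)\le\gon(Y)-2<\gon(Y).
\]
Since $Y$ carries no base-point-free pencil of degree below $\gon(Y)$, every effective divisor on $Y$ of degree $<\gon(Y)$ has $h^{0}=1$; hence $h^{0}(Y,\sheaf{O_Y}(M-B))=1$. (Lemma~\ref{lemma:|f^*Z|=|h^*T|} is stated with $M$ literally of the shape $B+Z'$, whereas here $M$ need not dominate $B$; but its proof uses only $j_M(X)\lineqv 2S_M+\pi_M^{\ast}(2M)$ and $h^{0}(Y,\sheaf{O_Y}(M-B))=1$, the latter forcing $H^{0}(X,\sheaf{O_X}(f^{\ast}M))$ and $H^{0}(\PP,\sheaf{O_{\PP}}(S_M+\pi_M^{\ast}M))$ to have the same dimension, so that the injection $j_M^{\ast}$ between them is an isomorphism.) Thus $\linsys{f^{\ast}M}=\{\,j_M^{\ast}T:T\in\linsys{S_M+\pi_M^{\ast}M}\,\}$.

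Next I would choose the data explicitly. For any divisor $E$ on $X$ one has $f^{\ast}(f_{\ast}E)=E+\sigma^{\ast}E$. Fix any $D\in\linsys{D}$; for an arbitrary $E\in\linsys{D}$ one has $E\lineqv D$, hence $\sigma^{\ast}E\lineqv\sigma^{\ast}D$ and
\[
D+\sigma^{\ast}E\ \lineqv\ D+\sigma^{\ast}D\ =\ f^{\ast}(f_{\ast}D)\ \lineqv\ f^{\ast}M,
\]
so $D+\sigma^{\ast}E$ is an effective divisor of $\linsys{f^{\ast}M}$ and hence equals $j_M^{\ast}T$ for some $T\in\linsys{S_M+\pi_M^{\ast}M}$. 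Now take $E\in\linsys{D}$ general: the set $\supp D\cup\sigma(\supp D)=f^{-1}(\supp f_{\ast}D)$ is finite, so all but finitely many $E\in\linsys{D}$ have $\supp E$ disjoint from it, reduced, and with no two $f$-conjugate points. Put $D':=\sigma^{\ast}E$. Then $f_{\ast}D\lineqv M$ and $f_{\ast}D'=f_{\ast}\sigma^{\ast}E=f_{\ast}E\lineqv M$ give~(b); $j_M^{\ast}T=D+D'\in\linsys{f^{\ast}M}$ is~(c); and $\supp f_{\ast}D\cap\supp f_{\ast}D'=f(\supp D)\cap f(\supp E)=\varnothing$ by the choice of $E$ is~(a). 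Finally, for such $E$ the divisor $D+D'$ is supported on $2d$ points lying over $2d$ distinct points of $Y$, so $j_M^{\ast}T$ contains no fibre of $f$; hence $T$ has no vertical component, and since it meets every ruling of $\PP$ in one point it is irreducible and is a section of $\PP$.

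I expect the only real obstacle to be the first step, namely proving $h^{0}(Y,\sheaf{O_Y}(M-B))=1$. This is exactly where both numerical hypotheses are consumed: the condition $g_x\ge 4g_y-2$ enters through Lemma~\ref{lemma:j_M-conductor} (effectivity of $M-B$), and the condition $d\le g_x-2g_y-1+\gon(Y)$ enters through the resulting estimate $\deg(M-B)<\gon(Y)$. The one further point of care is the slightly-too-narrow phrasing of Lemma~\ref{lemma:|f^*Z|=|h^*T|} addressed above. Once $D+\sigma^{\ast}E$ has been recognised as a pulled-back section, properties (a)--(c) fall out at once.
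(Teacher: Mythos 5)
Your overall route is genuinely different from the paper's: the paper re-runs the construction of $j_M$, takes $D'=\sigma^{\ast}D_0$ for the centre divisor $D_0$ and $T=\elm_{D_0}(S)$ for the section $S=\{b\}\times Y$ through $D$, and proves $j_M^{\ast}T=D+D'$ directly (Lemma~\ref{lemma:sigma^*(P)<=h^*(X)} plus a degree count), so that $T$ is a section by construction. Your alternative — identify $\linsys{f^{\ast}M}$ with $\{j_M^{\ast}T: T\in\linsys{S_M+\pi_M^{\ast}M}\}$ by the argument of Lemma~\ref{lemma:|f^*Z|=|h^*T|} and then exhibit $D+\sigma^{\ast}E$ as a member — is legitimate in its first two steps: the proof of Lemma~\ref{lemma:|f^*Z|=|h^*T|} indeed only needs $j_M(X)\lineqv 2S_M+\pi_M^{\ast}(2M)$ and $h^{0}(Y,\sheaf{O_Y}(M-B))=1$, and you supply the latter correctly from Lemma~\ref{lemma:j_M-conductor} together with $\deg(M-B)\le\gon(Y)-2$; properties (a) and (b) then follow as you say.

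The gap is in the final step, where you assert that $D+D'$ is ``supported on $2d$ points lying over $2d$ distinct points of $Y$.'' You may choose $E$ generally, but $D$ is the \emph{given} divisor of the pencil: it may be non-reduced or contain an $f$-conjugate pair, i.e.\ contain a whole fibre $f^{\ast}y$ (the hypothesis that $\linsys{D}$ is not induced by $f$ does not exclude this for individual members; indeed the singular points of $j_0(X)$ arise exactly from such members). In that case your argument that $T$ has no vertical component breaks down, and since $j_M^{\ast}$ is a bijection on these linear systems the $T$ with $j_M^{\ast}T=D+D'$ is unique, so you cannot re-choose it. The claim can still be rescued inside your framework: any member of $\linsys{S_M+\pi_M^{\ast}M}$ with a vertical component must be of the form $S_M+\pi_M^{\ast}A$ with $A\in\linsys{M}$ (a section in the class $S_M+\pi_M^{\ast}(M-A)$ with $0<\deg A<\deg M$ would correspond to a line-bundle quotient of $\sheaf{O_Y}\oplus\sheaf{O_Y}(-M)$ of negative degree, forcing it to be $S_M$ itself), and since $S_M\cap j_M(X)=\varnothing$ such a member pulls back to $f^{\ast}A$, a $\sigma$-invariant divisor, whereas $D+\sigma^{\ast}E$ is not $\sigma$-invariant for general $E$; hence $T$ is an irreducible section after all. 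Alternatively, argue as the paper does: two points of $D$ lying in one fibre of $f$ also lie in one fibre of $\phi_{\linsys{D}}$, hence are identified by $j_M$, and the paper's $T$, being a strict transform of a section under $\elm_{D_0}$ centred away from those fibres, is automatically a section with $j_M^{\ast}T\ge D$. As written, though, this step of your proof fails for perfectly admissible choices of $D$.
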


\begin{proof}
We proceed as in the proof of Lemma~\ref{lemma:j_M-construction}: Let $j_0 = \phi_{\linsys{D}} \times f : X \to \PP^1 \times Y$, $X_0=j_0(X)$, $S_0 = \{a\} \times Y$ ($a \in \PP^1$), $D_0 = j_0^{\ast}{S_0}$, $M= f_{\ast}{D_0}$ be as defined there. Since $D$ also lies on a fiber of the first projection $\PP^1 \times Y \to \PP^1$, $D = j_0^{\ast}{S}$ for some $S = \{b\} \times Y$ ($b \in \PP^1$). We assumed that $D_0$ consists of distinct points and $j_0(\supp(D_0))$ does not contain any singular points of $X_0$; but, we may further assume that
\begin{equation}\label{equation:supp(D)-cap-supp(D_0)=0}
\supp(f_{\ast}{D}) \cap \supp(f_{\ast}{D_0}) = \varnothing,
\end{equation}
i.e., no two points of $\supp(D)$ and $\supp(D_0)$ lie over the same fiber. Since $M=f_{\ast}{D_0}$, we have
\begin{equation}\label{equation:f_*(D)-in-|M|}
f_{\ast}{D}, f_{\ast}{D_0} \in \linsys{M}.
\end{equation}
Applying $\elm_{D_0}$ to $\PP^1 \times Y$, we got the morphism $j_M : X \to \PP$.

Set $T = \elm_{D_0}(S)$ and $D' = \sigma^{\ast}{D_0}$, where $\sigma : X \to X$ is the involution induced by $f$. It is clear that $\supp(f_{\ast}{D}) \cap \supp(f_{\ast}{D'}) = \varnothing$ by \eqref{equation:supp(D)-cap-supp(D_0)=0}. Note that $T$ is again a section of $\PP$.

\claiminline $j_M^{\ast}{T} = D + D'$: For any $P \in \supp(D_0)$, applying $\elm_{P}$, we have
\begin{equation*}
\sigma^{\ast}{P} \in j_0'^{\ast}(\elm_{P}(S))
\end{equation*}
by Lemma~\ref{lemma:sigma^*(P)<=h^*(X)}, where  $j_0' = \elm_P \circ j_0$. Since $T = \elm_{D_0}(S)$, it follows that $D' = \sigma^{\ast}{D_0} \le j_M^{\ast}{T}$. On the other hand, $\elm_{D_0}$ does not affect $S$ outside $\pi_0^{\ast}(f_{\ast}{D_0})$ and $D$ lies on $S$ outside $\pi_0^{\ast}(f_{\ast}{D_0})$ by \eqref{equation:supp(D)-cap-supp(D_0)=0}. Hence
\begin{equation*}
D = j_0^{\ast}{S} \le j_M^{\ast}{\elm_{D_0}(S)} = j_M^{\ast}{T}.
\end{equation*}
Therefore $j_M^{\ast}{T} = D + D'$. Since $j_M^{\ast}{T} \lineqv S_M + \pi_M^{\ast}{M}$ by Lemma~\ref{lemma:strict-transform-of-X}(d), we have $j_M^{\ast}{T} \in \linsys{f^{\ast}{M}}$.

\end{proof}

\begin{proposition}\label{proposition:primitive}
Assume that $g_x \ge 4g_y - 2$. Let $\linsys{D}$ be a complete base-point-free pencil of degree $d$ on $X$ which is not induced by $f$. If
\begin{equation*}
d \le g_x - 2g_y - 1 + \gon(Y),
\end{equation*}
then $\linsys{D}$ is primitive.
\end{proposition}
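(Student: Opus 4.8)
The plan is to show that the dual series $\linsys{K_X-D}$ is base-point-free. Recall from Lemma~\ref{lemma:j_M-construction} that $\linsys{D}$ gives a birational morphism $j_M : X \to \PP = \PP(\sheaf{O_Y}\oplus\sheaf{O_Y}(-M))$ with $M \lineqv f_{\ast}{D}$ of degree $d$ and $j_M(X) \lineqv 2S_M + \pi_M^{\ast}(2M)$. By Lemma~\ref{lemma:j_M-conductor} (applicable since $g_x \ge 4g_y-2$), the conductor of $j_M$ is $\Delta = f^{\ast}{N}$ with $N \in \linsys{M-B}$, and the generalized adjunction identity \eqref{equation:K_X=f^*(M+K_Y)-Delta} reads $K_X \lineqv f^{\ast}(M+K_Y) - \Delta = f^{\ast}(K_Y+B)$, consistent with \eqref{equation:K_X-f^*K_Y+f^*B}. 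The key point is to identify $\linsys{K_X-D}$ concretely inside $\PP$: using the section $T$ with $j_M^{\ast}{T} = D+D'$ supplied by Lemma~\ref{lemma:j_M-configuration} (here is where $d \le g_x-2g_y-1+\gon(Y)$ enters), one rewrites $K_X - D \lineqv f^{\ast}(M+K_Y) - \Delta - D$, and the strategy is to realize a general member of $\linsys{K_X-D}$ as (the pullback of) a curve in $\PP$ of the shape $S_M + \pi_M^{\ast}(\text{something}) - (\text{fixed contribution from }\Delta)$ restricted to $j_M(X)$, and then to bound its base locus fiber by fiber over $Y$.

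More concretely, I would argue as follows. First, from $K_X \lineqv j_M^{\ast}((K_\PP + X_M)|_{X_M}) - \Delta$ and $K_\PP + X_M \lineqv \pi_M^{\ast}(K_Y + M)$, every effective divisor in $\linsys{K_X-D}$ that contains $\Delta$ in its fixed part corresponds to a curve $C \in \linsys{(K_\PP+X_M+?)|_{X_M}}$; the cleaner route is to use the section $T$ to write $D \lineqv j_M^{\ast}{T} - D' = (S_M + \pi_M^{\ast}{M})|_{X_M} - D'$ up to the identification of $D$ inside $j_M^{\ast}T$, so that
\begin{equation*}
K_X - D \lineqv f^{\ast}(K_Y+M) - \Delta - \bigl(j_M^{\ast}{T} - D'\bigr) \lineqv j_M^{\ast}\bigl((\pi_M^{\ast}(K_Y+M) - T)|_{X_M}\bigr) - \Delta + D'.
\end{equation*}
Since $\pi_M^{\ast}(K_Y+M) - T \lineqv \pi_M^{\ast}(K_Y+M) - S_M - \pi_M^{\ast}{M} \lineqv S_M + \pi_M^{\ast}(K_Y - M)$ on a decomposable ruled surface (using $2S_M + \pi_M^{\ast}(2M) \sim X_M$ only indirectly; really one uses $-K_\PP \sim 2S_M + \pi_M^{\ast}(M-K_Y)$ so $\pi_M^{\ast}(K_Y+M)-T \sim S_M + \pi_M^{\ast}(K_Y - M) + \text{fiber corrections}$), one reduces the base-point-freeness of $\linsys{K_X-D}$ to moving the section $S_M$ in a linear system of sections together with fiber pullbacks from $Y$, plus accounting for the fixed divisors $-\Delta$ and $+D'$. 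The divisor $D'$ and the conductor $\Delta = f^{\ast}N$ both live over explicit fibers, disjoint from $\supp(D)$ by Lemma~\ref{lemma:j_M-configuration}(a), which is exactly what prevents a point of $\supp(D)$ from being forced into the base locus.

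I would then check base-point-freeness at an arbitrary point $P \in X$ by separating cases on the fiber $\pi_M(j_M(P))$: whether it lies in $\supp(M)$ (where $S_M$ meets $X_M$), in $\supp(N)$, or elsewhere, and whether $P \in \supp(D) \cup \supp(D')$. In each case one produces an effective divisor in $\linsys{K_X-D}$ missing $P$ by choosing a suitable section of $\PP$ through the appropriate points of $X_M$ on the relevant fibers and adding pullbacks of points of $Y$ in general position; the dimension count guaranteeing that such a section exists comes from $h^0(Y,\sheaf{O_Y}(K_Y-M+\cdots))$ together with $h^0$ of the line bundle on $Y$ attached to $S_M$, and here the hypothesis $d \le g_x - 2g_y - 1 + \gon(Y)$, equivalently $\deg M = d$ small, ensures $K_Y - M$ (suitably twisted) has enough sections — in fact this is where the gonality of $Y$ enters, since the obstruction to moving sections of $\PP$ past a fixed fiber is controlled by the minimal degree of a pencil on $Y$.

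The main obstacle I expect is the bookkeeping on the ruled surface: correctly tracking how the section $T$, the conductor $\Delta$, and the residual divisor $D'$ interact with $\linsys{S_M + \pi_M^{\ast}(\cdot)}$, and verifying that at fibers lying over $\supp(M)$ or $\supp(N)$ no base point is forced. The geometric heart is Lemma~\ref{lemma:j_M-configuration}(a) — the disjointness $\supp(f_{\ast}D)\cap\supp(f_{\ast}D')=\varnothing$ — which decouples the ``bad'' fibers of $T$ from those of $D$; combined with the numerical slack in $\deg M$ coming from the gonality bound, this should suffice to exhibit, for each $P$, a member of $\linsys{K_X-D}$ not passing through $P$, proving $\linsys{K_X-D}$ base-point-free and hence $\linsys{D}$ primitive.
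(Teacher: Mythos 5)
Your setup is the paper's (the morphism $j_M$ from Lemma~\ref{lemma:j_M-construction}, the conductor $\Delta=f^{\ast}N$ from Lemma~\ref{lemma:j_M-conductor}, the section $T$ with $j_M^{\ast}T=D+D'$ from Lemma~\ref{lemma:j_M-configuration}), but the proposal stops at a plan exactly where the proof has to happen. The efficient way to finish is the decomposition $\linsys{K_X-D}\supset\linsys{f^{\ast}M-D}+\linsys{f^{\ast}(K_Y-N)}$, coming from $K_X\lineqv f^{\ast}(M+K_Y)-\Delta$ and $\Delta=f^{\ast}N$, after which it suffices to prove each summand base-point-free. For $\linsys{f^{\ast}M-D}$ one needs \emph{two} members with disjoint supports: $D'=j_M^{\ast}T-D$ and $\sigma^{\ast}D=f^{\ast}f_{\ast}D-D$ (using $f_{\ast}D\in\linsys{M}$); Lemma~\ref{lemma:j_M-configuration}(a) separates them because $\sigma^{\ast}D$ lies over $\supp(f_{\ast}D)$ while $D'$ lies over $\supp(f_{\ast}D')$. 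Your sketch never introduces $\sigma^{\ast}D$ and instead treats $D'$ (and $\Delta$) as fixed divisors to be ``accounted for''; with only one explicit member in hand nothing excludes base points at $\supp(D')$, at $\supp(\Delta)$, or elsewhere. (Also, Lemma~\ref{lemma:j_M-configuration}(a) says nothing about $\Delta$ being disjoint from $\supp(D)$, contrary to what you assert.) The substitute you propose --- a fiber-by-fiber case analysis producing, for each $P$, a curve on $\PP$ whose pullback misses $P$ --- is only announced, and it starts from an incorrect identity: since $T\lineqv S_M+\pi_M^{\ast}M$, one has $\pi_M^{\ast}(K_Y+M)-T\lineqv\pi_M^{\ast}K_Y-S_M$, not $S_M+\pi_M^{\ast}(K_Y-M)$ (the difference $2S_M-\pi_M^{\ast}M$ meets a fiber in degree $2$, so no ``fiber correction'' fixes this); worse, $\pi_{M\ast}\sheaf{O_{\PP}}(-S_M)=0$, so the class $\pi_M^{\ast}K_Y-S_M$ carries no effective divisors at all, and there are no sections of $\PP$ to move in the way your plan requires.

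The second half of the argument, where the hypothesis $d\le g_x-2g_y-1+\gon(Y)$ genuinely enters, is missing entirely. Since $\linsys{D}$ is not induced by $f$, the Castelnuovo--Severi inequality (Lemma~\ref{lemma:CS-inequality}) gives $d=\deg M\ge g_x-2g_y+1$, hence $0\le\deg N=d-(g_x-2g_y+1)\le\gon(Y)-2$; therefore $h^0(Y,\sheaf{O_Y}(N+y))=1$ for every $y\in Y$ (otherwise $Y$ would carry a pencil of degree less than $\gon(Y)$), which by Riemann--Roch makes $\linsys{K_Y-N}$, and hence its pullback $\linsys{f^{\ast}(K_Y-N)}$, base-point-free. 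Your proposal only gestures at this step (``$K_Y-M$ suitably twisted has enough sections'') without the Castelnuovo--Severi input or the gonality contradiction, so as written the proof of base-point-freeness of $\linsys{K_X-D}$ --- and with it the primitivity of $\linsys{D}$ --- is not established.
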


\begin{proof}
We need to prove that $\linsys{K_X - D}$ is base-point-free. Set $X_M = j_M(X)$. By  Lemma~\ref{lemma:j_M-configuration}, there exist an effective divisor $D' \in \divisorgroup{X}$ and a section $T$ of $\PP$ satisfying three conditions in Lemma~\ref{lemma:j_M-configuration}. Let $\Delta$ be the conductor of $j_M$. By \eqref{equation:K_X=f^*(M+K_Y)-Delta}, we have $K_X \lineqv f^{\ast}(M+K_Y)-\Delta$. On the other hand, $\Delta = f^{\ast}{N}$ for some $N \in \linsys{M-B}$ by Lemma~\ref{lemma:j_M-conductor}; hence it follows that
\begin{equation*}
\begin{split}
\linsys{K_X - D} &= \linsys{f^{\ast}(M + K_Y) - \Delta - D} \\ %
&= \linsys{f^{\ast}(M+K_Y) - f^{\ast}{N} - D} \\ %
&= \linsys{(f^{\ast}{M} - D) + f^{\ast}(K_Y-N)};
\end{split}
\end{equation*}
hence
\begin{equation}
\linsys{f^{\ast}M - D} + \linsys{f^{\ast}(K_Y - N)} \subset \linsys{K_X - D}.
\end{equation}
Therefore it is enough to show that $\linsys{f^{\ast}M - D}$ and $\linsys{f^{\ast}(K_Y - N)}$ are base-point-free.

Since $f_{\ast}{D} \in \linsys{M}$  by Lemma~\ref{lemma:j_M-configuration}(\ref{item:f_*(D)-in-|M|}), we have
\begin{equation*}
\sigma^{\ast}{D} = f^{\ast}{f_{\ast}{D}} - D \in \linsys{f^{\ast}{M} - D},
\end{equation*}
where $\sigma : X \to X$ is the involution induced by $f$. On the other hand, $j_M^{\ast}{T} \in \linsys{f^{\ast}{M}}$ and $j_M^{\ast}{T} = D + D'$ by Lemma~\ref{lemma:j_M-configuration}(\ref{item:j_M^*(T)=D+D'}); hence
\begin{equation*}
D' = j_M^{\ast}{T} - D \in \linsys{f^{\ast}{M} - D}.
\end{equation*}
No two points of $\supp(D)$ and $\supp(D')$ lie over the same fiber by Lemma~\ref{lemma:j_M-configuration}(\ref{item:supp(f_*(D))-cap-supp(f_*(D'))=0}); hence
\begin{equation*}
\supp(\sigma^{\ast}{D}) \cap \supp(D') = \varnothing.
\end{equation*}
Therefore $\linsys{f^{\ast}M - D}$ is base-point-free.

\claiminline $\linsys{f^{\ast}(K_Y-N)}$ is base-point-free: It is enough to show that $\linsys{K_Y - N}$ is base-point-free. First of all, since $\linsys{D}$ is not induced by $f$, by the Castelnuovo-Severi inequality(Lemma~\ref{lemma:CS-inequality}), we have
\begin{equation*}
g_x -2 g_y + 1 \le \deg{M}.
\end{equation*}
Since $\deg{B} = g_x - 2g_y + 1$ and $d \le g_x -2g_y - 1 + \gon(Y)$ by the hypothesis, we have
\begin{equation*}
0 \le \deg{N} = \deg(M-B) \le \gon(Y) - 2.
\end{equation*}
Therefore
\begin{equation*}
h^0(Y, \sheaf{O_Y}(N)) = h^0(Y, \sheaf{O_Y}(N+y)) = 1
\end{equation*}
for all $y \in Y$, which implies that $\linsys{K_Y - N}$ is base-point-free.
\end{proof}

We get a lower bound for $s_X(2)$.

\begin{lemma}[{Coppens-Keem-Martens~\cite[2.2.1]{Coppens-Keem-Martens-1992}}]\label{lemma:subtracting=>not-induced}
Let $h : C \to C'$ be a non-trivial covering of curves and $g_{d}^{r}$ ($r \ge 1$) a base-point-free linear series on $C$ which is not induced by $h$. Let $P_1, \dotsc, P_{r-1}$ be $r-1$ general points of $C$. Then the base-point-free part of the linear series $\linsys{g_{d}^{r} - P_1 - \dotsb - P_{r-1}}$ on $C$ is not induced by $h$.
\end{lemma}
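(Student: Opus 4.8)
The plan is to argue contrapositively on the base locus, reducing everything to the case $r=2$. Write $\abs{g_{d}^{r}} = \abs{E}$ for short, and let $P_1,\dots,P_{r-1}$ be general points of $C$; set $\abs{E'}$ equal to the base-point-free part of $\abs{E - P_1 - \dots - P_{r-1}}$, which is a base-point-free pencil (since $P_1,\dots,P_{r-1}$ general impose independent conditions, $\dim\abs{E-P_1-\dots-P_{r-1}} = r-(r-1) = 1$). Suppose, for contradiction, that $\abs{E'}$ is induced by $h$, i.e. $\abs{E'} = h^{\ast}\abs{F}$ for some base-point-free pencil $\abs{F}$ on $C'$. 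The goal is to propagate this "inducedness" back up to $\abs{E}$ itself and contradict the hypothesis that $\abs{E}$ is not induced by $h$.

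First I would recall the standard fact (Coppens--Keem--Martens style) that a base-point-free linear series $\abs{E}$ on $C$ is induced by $h$ if and only if for every pair of points $Q$, $Q'$ with $h(Q) = h(Q')$, one has $\dim\abs{E - Q - Q'} = \dim\abs{E} - 1$ (that is, $Q + Q'$ imposes only one condition on $\abs{E}$, because the fiber $h^{\ast}h(Q)$ moves inside $\abs{E}$); equivalently, $\abs{E}$ is cut out by pulled-back divisors. So I want to show: for a general point $Q \in C$, writing $Q' $ for its conjugate over $C'$ (on a general fiber of $h$), the divisor $Q + Q'$ imposes one condition on $\abs{E}$. Since $P_1,\dots,P_{r-1}$ are general, I may take $Q$ to be, say, a further general point, or better: exploit that $\abs{E'} = h^{\ast}\abs{F}$ means that after subtracting the $r-1$ general points (and the fixed base divisor $Z$ of $\abs{E-P_1-\dots-P_{r-1}}$), every member is a pull-back. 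Concretely, $\abs{E - P_1 - \dots - P_{r-1} - Z} = h^{\ast}\abs{F}$, so each $P_i$, as a general point, satisfies $h^{\ast}h(P_i) - P_i \le E - P_1 - \dots - \widehat{P_i} - \dots - P_{r-1} - Z$ (the conjugate fiber point of $P_i$ appears). Reinstating one $P_i$ at a time and using that the $P_i$ are general, I would lift this to the statement that $h^{\ast}h(Q) - Q \le E - (\text{general } r-1 \text{ points not including } Q)$ for general $Q$, hence $\dim\abs{E - Q - Q'} \ge \dim\abs{E} - 1$ for $Q'$ the conjugate of $Q$; the reverse inequality is automatic, so $Q+Q'$ imposes exactly one condition. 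Letting $Q$ vary over a dense set and invoking the characterization above gives that $\abs{E}$ is induced by $h$ — a contradiction.

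The main obstacle, and the step deserving the most care, is the lifting argument: passing from "the pencil obtained after subtracting $r-1$ general points (and its fixed part) is a pull-back" back to "the whole series $\abs{E}$ is a pull-back". The subtlety is that subtracting general points can \emph{create} inducedness that was not present before — e.g. a non-induced net can specialize, after subtracting a general point, to a pencil that happens to be composed with $h$ — so one must genuinely use the \emph{genericity} of $P_1,\dots,P_{r-1}$ and the fact that there are $r-1$ of them (one fewer than the dimension), not a crude dimension count. The clean way is to phrase it via the incidence/monodromy argument: if for $(r-1)$-general $(P_1,\dots,P_{r-1})$ the residual pencil is composed with $h$, then the conjugate point $P_i'$ of each $P_i$ is forced into the residual divisor class, and because the $P_i$ range over a dense subset of $C^{r-1}$ and the conjugation map $C \dashrightarrow C$ (over $C'$) is dominant, one concludes $h^{\ast}h(P) \le E - (\text{general effective divisor of degree } r-2 \text{ in } \abs{E - 2P'' })$-type containment for general $P$; this is exactly the hypothesis of the "two-conjugate-points impose one condition" criterion. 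I would cite Coppens--Keem--Martens for the criterion and carry out the monodromy/genericity step explicitly, as that is where the real content lies; the rest is bookkeeping with base loci.
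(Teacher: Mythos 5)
The paper offers no proof of this lemma (it is quoted from Coppens--Keem--Martens), so your attempt can only be judged on its own terms, and there it has a genuine gap at exactly the step you flag as "the main obstacle." Your overall strategy is reasonable: assume the base-point-free part of $\linsys{g^r_d - P_1 - \dotsb - P_{r-1}}$ equals $h^{\ast}\abs{F}$ and contradict the non-inducedness of $g^r_d$ via the criterion that a base-point-free series is induced by $h$ iff no conjugate pair of points is separated. But what actually follows from $\linsys{g^r_d - P_1 - \dotsb - P_{r-1}} = Z + h^{\ast}\abs{F}$ is only this: for a point $Q$ (general, with conjugate $Q'$), every member of $g^r_d$ containing $P_1 + \dotsb + P_{r-1} + Q$ also contains $Q'$. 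Your "lifting" instead asserts that the conjugate $P_i'$ of each subtracted point is forced into the residual divisors, i.e.\ $h^{\ast}h(P_i) - P_i \le E - P_1 - \dotsb - \widehat{P_i} - \dotsb - P_{r-1} - Z$. This is unjustified and in general false: the $P_i$ are removed from the members of the series, and only points of the moving part $h^{\ast}F$ carry their conjugates; $P_i'$ lies in $Z + h^{\ast}F$ only for the (generically unique) member with $h(P_i) \in F$, not for the whole pencil. So "reinstating one $P_i$ at a time" has no mechanism behind it, and the concluding appeal to "monodromy/genericity" is deferred rather than carried out --- precisely where the content of the lemma lies.

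To close the gap you must pass from "divisors through $P_1 + \dotsb + P_{r-1} + Q$ contain $Q'$" to "divisors through $Q$ contain $Q'$", and the standard way is projective: the residual pencil is the composition of $\phi = \phi_{g^r_d} : C \to \PP^r$ with projection from the $(r-2)$-plane $\Lambda = \langle \phi(P_1), \dotsc, \phi(P_{r-1}) \rangle$, and its being induced by $h$ says that for every conjugate pair $(Q,Q')$ the span $\langle \Lambda, \phi(Q), \phi(Q') \rangle$ lies in a hyperplane. Fix one conjugate pair with $\phi(Q) \ne \phi(Q')$ (it exists because $g^r_d$ is not induced by $h$). The condition $\dim \langle \phi(P_1), \dotsc, \phi(P_{r-1}), \phi(Q), \phi(Q') \rangle \le r-1$ is closed in $(P_1, \dotsc, P_{r-1}) \in C^{r-1}$; holding on a dense set of tuples it holds for all tuples, and since the image curve is non-degenerate in $\PP^r$ one can choose $P_1, \dotsc, P_{r-1}$ successively with images outside the current span, forcing the span to be all of $\PP^r$ --- a contradiction. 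Some such semicontinuity-plus-non-degeneracy (or monodromy) argument is the missing idea; without it your proposal is not a proof. Two smaller points: the $g^r_d$ in the statement need not be complete, so writing it as $\abs{E}$ and speaking of residual/dual series is imprecise, and your "one-condition" criterion should be applied to general conjugate pairs together with a Zariski-closure argument to conclude that $\phi$ is constant on all fibers of $h$.
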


\begin{theorem}\label{theorem:lower-bound}
Let $X$ be a smooth curve of genus $g_x$. Assume that $X$ is a double cover of a smooth curve $Y$ of genus $g_y \ge 2$. If $g_x \ge 4g_y - 2$, then
\begin{equation*}
s_{X}(2) \ge g_x - 2g_y + 1 + \gon(Y).
\end{equation*}
\end{theorem}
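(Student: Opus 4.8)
The plan is to argue by contradiction, using Proposition~\ref{proposition:primitive} together with the fact that plane models correspond to nets (i.e.\ base-point-free $g^2_s$'s that are birationally very ample, but in fact any $g^2_s$ of that minimal degree would do). Suppose $s_X(2) = s \le g_x - 2g_y + \gon(Y)$, so that $X$ carries a base-point-free net $\linsys{E}$ of degree $s$ computing $s_X(2)$. First I would subtract one general point $P$ from $\linsys{E}$: by Lemma~\ref{lemma:subtracting=>not-induced} (applied to $h = f$, so using that a net of such small degree cannot be induced by $f$ --- I should check this, but a net induced by $f$ would force $Y$ to carry a $g^2$, whence $\gon(Y)$ small, or more directly the degree bookkeeping rules it out), the base-point-free part $\linsys{D}$ of $\linsys{E - P}$ is a base-point-free pencil of degree $d \le s - 1 \le g_x - 2g_y - 1 + \gon(Y)$ which is still not induced by $f$.

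Next I would apply Proposition~\ref{proposition:primitive}: since $g_x \ge 4g_y - 2$ and $d \le g_x - 2g_y - 1 + \gon(Y)$, the pencil $\linsys{D}$ is primitive, i.e.\ $\linsys{K_X - D}$ is base-point-free. Now the key numerical step: from $\linsys{E} \supseteq \linsys{D + P}$ (more precisely $\linsys{E}$ contains $\linsys{D} + P$ plus possibly further base points we threw away, but in any case $s \ge d + 1$) and the fact that $\linsys{E}$ is a net, I get $\dim \linsys{D} \ge 1$ with $\dim \linsys{E} = 2$; adding back the point $P$ to a base-point-free pencil, the series $\linsys{D+P}$ has dimension $1$, so $\linsys{E}$ has at least one base point removed, meaning $s \ge d + 2$ unless $\linsys{D+P}$ is already a net --- which it is not, since adding a base point to a pencil keeps it a pencil. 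Hence $d \le s - 2$. Then I would use the primitiveness of $\linsys{D}$ to bound $d$ from below: a base-point-free primitive pencil satisfies $\dim\linsys{K_X - D} = g_x - d - 2$ and Clifford-type / Riemann--Roch considerations, but more to the point, I would run the counting argument from the introduction --- starting from the net $\linsys{E}$ of degree $s$ giving a plane model, repeatedly adjoining general points and comparing with $\linsys{K_X}$, exactly as Keem--Martens do --- to derive $s \ge g_x - 2g_y + 1 + \gon(Y)$, contradicting the assumption $s \le g_x - 2g_y + \gon(Y)$.

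Concretely, the cleanest route is: let $\linsys{D}$ be as above, primitive, of degree $d$. Since $\linsys{K_X - D}$ is base-point-free and $D$ is not induced by $f$, I claim $d \ge g_x - 2g_y - 1 + \gon(Y)$ is forced whenever $\linsys{D}$ arises by the subtraction procedure from a net of degree $\le g_x - 2g_y + \gon(Y)$; equivalently, no net of degree $< g_x - 2g_y + 1 + \gon(Y)$ exists. The mechanism: if $\linsys{E}$ were such a net, then $\linsys{D}$ primitive means $\linsys{K_X - D}$ base-point-free, so $\linsys{K_X - E} = \linsys{K_X - D - P - (\text{base points})}$ is obtained from a base-point-free series by subtracting points in general position, hence has the expected dimension and is the dual of a birationally very ample net only if the degree is large enough --- I would make this precise via $\dim\linsys{E} + \dim\linsys{K_X - E} = g_x - 1 - s + (\text{something})$ and the non-negativity forced by base-point-freeness.

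The main obstacle I anticipate is the passage from "primitive pencil $\linsys{D}$ not induced by $f$" to the sharp bound $d \ge g_x - 2g_y - 1 + \gon(Y)$: Proposition~\ref{proposition:primitive} gives the implication in one direction (small degree $\Rightarrow$ primitive), so I must be careful that I am not using it circularly. The right way is: assume for contradiction $s = s_X(2) \le g_x - 2g_y + \gon(Y)$; produce $\linsys{D}$ of degree $d \le s - 2 \le g_x - 2g_y - 2 + \gon(Y) < g_x - 2g_y - 1 + \gon(Y)$ (the $-2$ coming from removing one point to get a pencil and noting a plane-model net has dimension $2 > 1$), which by Proposition~\ref{proposition:primitive} is primitive; but then $\linsys{K_X - D}$ base-point-free forces $\linsys{K_X - D - Q}$ to be a proper subseries for general $Q$, and unwinding this against $\linsys{K_X - E}$ yields $h^0(K_X - E) = g_x - 1 - s$, so the plane model has genus $\binom{s-1}{2} - \delta$ with $\delta$ constrained, ultimately contradicting $s \le g_x - 2g_y + \gon(Y)$ by a direct genus/degree count. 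I would need to verify the arithmetic of this final count carefully, and also double-check the edge case where the general point $P$ subtracted from $\linsys{E}$ is in fact a base point forced by the geometry (it is not, for $P$ general, so $\linsys{E - P}$ has a base-point-free part of degree exactly $s - 1$).
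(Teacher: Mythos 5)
Your setup is the same as the paper's: subtract a general point $P$ from a minimal-degree simple net $\linsys{E}$, observe via Lemma~\ref{lemma:subtracting=>not-induced} that the resulting complete base-point-free pencil $\linsys{D}=\linsys{E-P}$ of degree $s_X(2)-1$ is not induced by $f$ (here ``not induced'' is immediate anyway, since an induced series factors through the degree-two map and so is never simple), and try to play this off against Proposition~\ref{proposition:primitive}. But you never supply the one fact that actually yields the contradiction: the pencil $\linsys{E-P}$ \emph{cannot be primitive}. This is the ingredient the paper imports from Coppens--Keem--Martens \cite[1.1.3]{Coppens-Keem-Martens-1992}, and it is a short Riemann--Roch argument: since $D+P$ is dominated by the net, $h^0(D+P)\ge 3>h^0(D)=2$, hence $h^0(K_X-D-P)=h^0(K_X-D)$, i.e.\ $P$ is a base point of $\linsys{K_X-D}$ (which is nonempty in the relevant degree range $d\le g_x-1$), so $\linsys{K_X-D}$ is not base-point-free. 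Combined with Proposition~\ref{proposition:primitive} this immediately forces $s_X(2)-1\ge g_x-2g_y+\gon(Y)$. In your write-up the contradiction is instead deferred to an unspecified genus/degree count in the style of Keem--Martens and to ``unwinding against $\linsys{K_X-E}$,'' none of which is carried out, so the argument does not close as written.

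Moreover, the numerical step you do attempt is incorrect: you assert that $\linsys{D+P}$ is still a pencil (``adding a base point to a pencil keeps it a pencil'') and deduce $d\le s-2$. In fact $\linsys{D+P}$ contains the net, so it has dimension at least $2$, and for $P$ general on a simple net the series $\linsys{E-P}$ is base-point-free of degree exactly $s-1$, so $d=s-1$. The dimension jump $h^0(D+P)=h^0(D)+1$ that you mishandle is precisely the mechanism that makes $P$ a base point of $\linsys{K_X-D}$ and hence kills primitivity; once this is stated correctly, the theorem follows in the three lines indicated above, with no auxiliary counting needed.
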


\begin{proof}
Let $g_{s_{X}(2)}^2$ be a simple net on $X$. Subtract a general point $P \in X$ from $g_{s_{X}(2)}^2$; we get a complete base-point-free pencil
\begin{equation*}
g_{s_{X}(2) - 1}^{1} = \linsys{g_{s_{X}(2)}^{2} - P}.
\end{equation*}
By Lemma~\ref{lemma:subtracting=>not-induced}, $g_{s_{X}(2) - 1}^{1}$ is not induced by $f$. On the other hand $g_{s_{X}(2) - 1}^{1}$ cannot be primitive; cf.~Coppens-Keem-Martens~\cite[1.1.3]{Coppens-Keem-Martens-1992}. Therefore, applying Proposition~\ref{proposition:primitive} to $g_{s_X(2)-1}^1$, we have
\begin{equation*}
s_{X}(2) - 1 \ge g_x - 2g_y + \gon(Y). \qedhere
\end{equation*}
\end{proof}

\begin{proposition}\label{lemma:phi*psi-is-birational}
Assume that $g_x \ge 8g_y + 2$. Let $\psi_k : Y \to \PP^1$ be a morphism of degree $\gon(Y)$ and let $\psi = f \circ \psi_k$. Then there exists a morphism $\phi : X \to \PP^1$ of degree $g_x - 2g_y + 1$ such that the morphism $\phi \times \psi : X \to \PP^1 \times \PP^1$ is birational onto its image.
\end{proposition}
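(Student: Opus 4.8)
The plan is to produce $\phi$ as (essentially) a pencil of the form $\linsys{D}$ with $f_{\ast}D \in \linsys{M}$ for a suitable $M$ on $Y$, chosen so that the resulting map to $\PP^1 \times \PP^1$ separates a general pair of points of $X$. The natural candidate for $\phi$ is a base-point-free pencil of degree $g_x - 2g_y + 1 = \deg B$ that is \emph{not} induced by $f$; then $\phi \times f$ is already birational onto its image, and composing the second factor with $\psi_k : Y \to \PP^1$ of degree $\gon(Y)$ only needs to be checked not to destroy birationality. So the first step is to exhibit such a pencil. By \eqref{equation:H^0(X, f^*(M))}, for any effective $M$ on $Y$ of degree $g_x - 2g_y + 1$ we have $h^0(X, f^{\ast}M) = h^0(Y, M) + h^0(Y, M - B)$; taking $M$ general of this degree gives $h^0(Y, M) = \deg M - g_y + 1 = g_x - 3g_y + 2 \ge 2$ (using $g_x \ge 4g_y - 2$, in fact comfortably under $g_x \ge 8g_y+2$), so $f^{\ast}\linsys{M}$ already contains pencils; but these are induced by $f$. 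To get a pencil of degree exactly $\deg B$ that is not induced, I would instead use a sub-pencil of $\linsys{f^{\ast}M}$ for $M$ of slightly larger degree, or — cleaner — invoke Halphen-type / Brill--Noether existence on $X$ directly: since $g_x - 2g_y + 1 \le g_x - 1$ and the gonality of $X$ is at most $g_x - 2g_y + 1$ anyway (pull back a pencil on $Y$, but that one is induced), I want a pencil of this degree that does \emph{not} factor through $f$. The existence of such a non-induced pencil of degree $g_x - 2g_y+1$ is where the hypothesis $g_x \ge 8g_y + 2$ will really be used, guaranteeing enough room in the Brill--Noether count: $\rho(g_x, 1, g_x - 2g_y+1) = g_x - 2(2g_y) = g_x - 4g_y \ge 4g_y + 2 > 0$, and the locus of induced pencils has strictly smaller dimension (it is a copy of a $W^1_{d/2}$-type locus on $Y$, of dimension at most $g_y$-ish), so a general pencil of that degree is not induced.

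The second step is to take $\phi : X \to \PP^1$ associated with this non-induced pencil $\linsys{D}$, so that $\deg \phi = g_x - 2g_y + 1$, and to form $j_0 = \phi \times f : X \to \PP^1 \times Y$, which is birational onto its image precisely because $\linsys{D}$ is not induced by $f$ (this is exactly the argument opening the proof of Lemma~\ref{lemma:j_M-construction}). Then I would post-compose the $Y$-factor with $\psi_k$ to get $\phi \times \psi : X \to \PP^1 \times \PP^1$, which factors as $(\mathrm{id}_{\PP^1} \times \psi_k) \circ j_0$. Since $j_0$ is birational onto $X_0 := j_0(X) \subset \PP^1 \times Y$ and $\mathrm{id} \times \psi_k$ restricts to a generically finite map $X_0 \to \PP^1 \times \PP^1$, what remains is to check this last restriction is \emph{generically one-to-one}, i.e. birational. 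Equivalently, I must rule out the possibility that $\phi \times \psi$ identifies a general point $P$ of $X$ with another point $P^{\ast}$: such an identification would force $\psi(P) = \psi(P^{\ast})$, hence $\psi_k(f(P)) = \psi_k(f(P^{\ast}))$, and $\phi(P) = \phi(P^{\ast})$. If $f(P) = f(P^{\ast})$ then $P^{\ast} = \sigma(P)$, and $\phi(P) = \phi(\sigma(P))$ for a general $P$ would mean $\linsys{D}$ is $\sigma$-invariant, hence induced by $f$ — contradiction. If $f(P) \neq f(P^{\ast})$ but $\psi_k(f(P)) = \psi_k(f(P^{\ast}))$, then $P$ and $P^{\ast}$ lie over a pair of points in the same fiber of $\psi_k$; I would argue that for general $P$ the equality $\phi(P) = \phi(P^{\ast})$ cannot hold identically on such a correspondence, using again that $\linsys{D}$ is not of the special (pulled-back) form, together with a dimension count: the correspondence $\{(P, P^{\ast}) : \psi_k f(P) = \psi_k f (P^{\ast}),\ P^{\ast} \neq \sigma P\}$ is a curve (or finite union of curves) on $X \times X$, and requiring $\phi(P) = \phi(P^{\ast})$ on a whole component pins down $\phi$ to come from $Y$ via $\psi_k$, contradicting that $\deg\phi = g_x - 2g_y + 1 > 2\gon(Y)$ (here $g_x \ge 8g_y+2$ gives $g_x - 2g_y + 1 \ge 6g_y + 3 > 2\gon(Y)$ since $\gon(Y) \le \lceil (g_y+2)/2\rceil$).

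The third step is bookkeeping: record that the image of $\phi \times \psi$ is a (possibly singular) curve of bidegree $(\deg\psi, \deg\phi) = (2\gon(Y),\ g_x - 2g_y + 1)$ on $\PP^1 \times \PP^1$, and that the map is birational onto it; that is exactly the assertion of the Proposition. If a plane model is subsequently wanted one projects from a point, but the Proposition itself stops at $\PP^1 \times \PP^1$.

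\textbf{Main obstacle.} The genuinely delicate point is the second step, specifically the subcase $f(P) \neq f(P^{\ast})$: one must show that post-composing with the low-degree map $\psi_k$ does not accidentally glue the image along the fibers of $\psi_k$. The clean way to control this is the dimension/degree inequality $\deg\phi > 2\gon(Y)$ forced by $g_x \ge 8g_y + 2$, which is presumably the real reason this hypothesis (rather than the weaker $g_x \ge 4g_y - 2$) appears in the statement; making that comparison rigorous — i.e. showing that a component of the gluing correspondence on which $\phi$ is constant-to-constant would force $\phi$ to factor through $\psi_k$, and that $\deg\phi$ then cannot exceed $2\gon(Y)$ — is where the argument has to be written carefully rather than waved through.
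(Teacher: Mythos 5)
There is a genuine gap, and it sits exactly at the point you flag as the ``main obstacle,'' namely the case $f(P)\neq f(P^{\ast})$ in your second step. If $\phi\times\psi$ fails to be birational onto its image, the correct conclusion (take $Z$ to be the normalization of the image curve) is that $\phi$ and $\psi$ \emph{both} factor through a common covering $h\colon X\to Z$ of some degree $n\ge 2$; since then $n$ divides both $\deg\phi=g_x-2g_y+1$ and $\deg\psi=2\gon(Y)$, all one gets is that $n$ is a common divisor of these two numbers --- \emph{not} that $\phi$ factors through $\psi_k\circ f$ itself. A component of the correspondence $\{\psi(P)=\psi(P^{\ast})\}$ on which $\phi(P)=\phi(P^{\ast})$ holds identically only generates such an intermediate covering; it does not ``pin $\phi$ down to come from $Y$ via $\psi_k$.'' Consequently your inequality $\deg\phi>2\gon(Y)$ rules out essentially nothing: when $\gcd(g_x-2g_y+1,\,2\gon(Y))$ admits a divisor $n\ge 2$, a priori $\phi$ could be pulled back via a degree-$n$ map $h\colon X\to Z$ through which $\psi$ also factors, and a large $\deg\phi$ is perfectly compatible with that ($\deg\phi=n\cdot\deg\phi'$). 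Even in the extreme case you do treat, ``$\phi$ factors through $\psi$'' would give $2\gon(Y)\mid\deg\phi$, which a strict inequality $\deg\phi>2\gon(Y)$ does not contradict. This is precisely why the paper's proof dismisses the coprime case in one line and devotes all its effort to the case where $g_x-2g_y+1$ and $2\gon(Y)$ have a common divisor $n\ge 2$.

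The missing mechanism is that one cannot fix an arbitrary non-induced pencil and then verify birationality for it; one must \emph{choose} $\phi$ so as to avoid every pencil pulled back from a possible intermediate curve $Z$. The paper does this in three moves: (i) Castelnuovo--Severi applied to $h$ and $f$ (using $\phi$ not factoring through $f$, $n\le\gon(Y)\le\frac{g_y+3}{2}$, and $g_x\ge 8g_y+2$) forces $g_z\ge 3$; (ii) de Franchis then makes the set of relevant pairs $(Z,h)$ finite; (iii) by Ballico--Keem there is a component $\Sigma$ of $W^1_{g_x-2g_y+1}(X)$ of base-point-free pencils not induced by $f$ with $\dim\Sigma=g_x-4g_y$, while $\dim h^{\ast}W^1_{(g_x-2g_y+1)/n}(Z)\le g_z\le\frac{g_x+n-1}{n}$, and $g_x\ge 8g_y+2$ makes $\dim\Sigma$ strictly larger, so a general member of $\Sigma$ lies outside all the loci $h^{\ast}W^1_{(g_x-2g_y+1)/n}(Z)$. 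That is where the hypothesis $g_x\ge 8g_y+2$ is really spent --- not on a comparison of $\deg\phi$ with $2\gon(Y)$. Relatedly, your step-1 count is also too quick: the locus of pencils whose base-point-free part is induced by $f$ is not just ``a copy of $W^1_{d/2}(Y)$,'' because adding base points produces loci of dimension up to roughly $g_x-3g_y$, which can exceed $\rho=g_x-4g_y$; the paper instead cites Keem--Ohbuchi for the existence of a pencil of degree $g_x-2g_y+1$ not factoring through $f$ and Ballico--Keem for the dimension statement above.
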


\begin{proof}
If $g_x - 2g_y + 1$ and $2\gon(Y)$ are relatively prime, the theorem is clear. We now assume that $g_x - 2g_y + 1$ and $2\gon(Y)$ are \emph{not} relatively prime. There is a morphism $\phi : X \to \PP^1$, say, of degree $g_x - 2g_y + 1$ which does not factor through $f$; Keem-Ohbuchi~\cite{Keem-Ohbuchi-2004}. Suppose that the morphism $\phi \times \psi$ is not birational onto its image. Then $\phi : X \to \PP^1$ is induced by a morphism $h : X \to Z$ of degree $n \ge 2$ where $Z$ is a smooth curve of genus $g_z < g_x$. We then have the following commutative diagram
\begin{equation*}
\xymatrix{%
 & & X \ar[d]_{h} \ar[ddll]_{\phi} \ar[dr]^{f} & & \\ %
 & & Z \ar[dll] \ar[drr] & Y \ar[dr]^{\psi_k}& \\ %
\PP^1 & & & & \PP^1
}
\end{equation*}
We now prove that $g_z \ge 3$. Suppose that $g_z \le 2$. Since $\phi$ does not factor through $f$, the morphism $h$ cannot factor through the double covering  $f$. Therefore it follows by the Castelnuovo-Severi inequality that
\begin{equation}\label{equation:g_x<=ng_z+2g_y+n-1}
g_x \le ng_z + 2g_y + n-1.
\end{equation}
Since $n \le \gon(Y) \le \frac{g_y + 3}{2}$, we then have
\begin{equation*}
g_x \le \frac{g_y + 3}{2} \cdot 2 + 2g_y + \frac{g_y + 3}{2} - 1 = \frac{7g_y + 7}{2}
\end{equation*}
from \eqref{equation:g_x<=ng_z+2g_y+n-1}, which contradicts to the hypothesis $g_x \ge 8g_y + 2$. Therefore $g_z \ge 3$.

We now prove that there exists a base-point-free pencil $g_{g_x - 2g_y + 1}^{1}$ on $X$ such that it is not induced by any morphism $h : X \to Z$. Let
\begin{equation*}
\deFranchis = \left\{ (Z, h) \left| %
\begin{aligned}
&\text{$Z$ is a smooth curve of genus $g_z \ge 3$,} \\ %
&\text{$h : X \to Z$ is a covering of degree $n$,} \\ %
&\text{where $n \ge 2$ is a common divisor of $g_x - 2g_y + 1$ and $2\gon(Y)$.}
\end{aligned}\right.\right\}
\end{equation*}
Note that $\deFranchis$ is a finite set by de Franchis' theorem. Fix $(Z, h) \in \deFranchis$.
Let $\Sigma$ be an irreducible component of $W_{g_x - 2g_y + 1}^{1}(X)$ whose general element is a base-point-free complete pencil not induced by the double covering $f$. Since $g_x \ge 8g_y - 3$, it follows by Ballico-Keem~\cite[Proposition~3.1]{Ballico-Keem-2006} that
\begin{equation*}
\dim{\Sigma} = g_x - 4g_y.
\end{equation*}
On the other hand, we have
\begin{equation*}
\dim{h^{\ast}{W_{\frac{g_x - 2g_y + 1}{n}}^{1}(Z)}} \le g_z.
\end{equation*}
By Riemann-Hurwitz formula, we have
\begin{equation*}
g_z \le \frac{g_x + n - 1}{n}.
\end{equation*}
Since $g_x \ge 8g_y + 2$ and $n \ge 2$, we have
\begin{equation*}
\begin{split}
&\dim{\Sigma} - \dim{h^{\ast}{W_{\frac{g_x - 2g_y + 1}{n}}^{1}(Z)}} \\ %
&\quad\quad \ge g_x - 4g_y - g_z  \\ %
&\quad\quad \ge g_x - 4g_y - \frac{g_x + n - 1}{n} = \frac{(n-1)g_x - 4ng_y - n + 1}{n} \\ %
&\quad\quad \ge \frac{(n-1)(8g_y + 2) - 4ng_y - n + 1}{n} = \frac{4ng_y - 8g_y + n - 1}{n} \\ %
&\quad\quad> 0
\end{split}
\end{equation*}
Therefore one may take a general pencil $g_{g_x - 2g_y + 1}^{1}$ of $X$ so that it is not induced by $f$ and
\begin{equation*}
g_{g_x - 2g_y + 1}^{1} \in \Sigma \setminus \left(\bigcup_{(Z, h) \in \deFranchis} h^{\ast}{W_{\frac{g_x - 2g_y + 1}{n}}^{1}(Z)}\right). \qedhere
\end{equation*}
\end{proof}

We finally get an upper bound for $s_X(2)$.

\begin{theorem}\label{theorem:upper-bound}
Let $X$ be a smooth curve of genus $g_x$. Assume that $X$ is a double cover of a smooth curve $Y$ of genus $g_y \ge 2$. If $g_x \ge 8g_y + 2$, then
\begin{equation*}
s_{X}(2) \le g_x - 2g_y - 1 + 2\gon(Y).
\end{equation*}
\end{theorem}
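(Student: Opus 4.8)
The plan is to produce an explicit birational map from $X$ to a plane curve of the predicted degree by combining the two morphisms supplied by Proposition~\ref{lemma:phi*psi-is-birational}. Concretely, under the hypothesis $g_x \ge 8g_y + 2$, that proposition gives a morphism $\phi : X \to \PP^1$ of degree $g_x - 2g_y + 1$ together with $\psi = f \circ \psi_k : X \to \PP^1$ of degree $2\gon(Y)$ such that $\phi \times \psi : X \to \PP^1 \times \PP^1$ is birational onto its image. Writing $F_1, F_2$ for the two rulings of $\PP^1 \times \PP^1$, the image curve $X' = (\phi \times \psi)(X)$ lies in the linear system $\linsys{(g_x-2g_y+1)F_1 + 2\gon(Y)F_2}$ (up to the numerical type). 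The idea is then to pass from $\PP^1 \times \PP^1$ to $\PP^2$ by a birational map realized as the blow-down of one ruling fiber, i.e. the inverse of a single blow-up of $\PP^2$ at a point; this is the standard link $\PP^1 \times \PP^1 \dashrightarrow \PP^2$ contracting one $(-1)$-type fiber after blowing up a point.

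The key steps, in order, are as follows. First, invoke Proposition~\ref{lemma:phi*psi-is-birational} to get $\phi$ and $\psi$ with $\phi \times \psi$ birational onto its image $X' \subset \PP^1 \times \PP^1$. Second, identify the bidegree of $X'$: since $\phi$ has degree $g_x - 2g_y + 1$ and $\psi$ has degree $2\gon(Y)$, the curve $X'$ meets a general fiber of the first projection in $2\gon(Y)$ points and a general fiber of the second projection in $g_x - 2g_y + 1$ points, so $X' \lineqv (g_x - 2g_y + 1)F_1 + 2\gon(Y) F_2$. Third, choose a point $P_0 \in \PP^1 \times \PP^1$ lying on $X'$ but on neither a fiber that is tangent to $X'$ nor otherwise in special position — generically $X'$ passes once through $P_0$ — and take the standard Cremona-type link $\beta : \PP^1 \times \PP^1 \dashrightarrow \PP^2$ obtained by blowing up $P_0$ and contracting the strict transforms of the two fibers through $P_0$; $\beta$ is birational. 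Fourth, compute the degree of the image plane curve $\beta(X')$: blowing up $P_0$ drops the self-type of $X'$ by the multiplicity of $X'$ at $P_0$ (which is $1$ for generic $P_0$) and contracting the fiber $F_1$ through $P_0$ lowers the intersection with it, and likewise for $F_2$; the net effect is that $\deg \beta(X') = (g_x - 2g_y + 1) + 2\gon(Y) - 1 = g_x - 2g_y - 1 + 2\gon(Y)$. Fifth, observe that since $\beta$ is birational and $\phi \times \psi$ is birational onto $X'$, the composite $X \to \beta(X')$ is birational, so $\beta(X')$ is a plane model of $X$ of the asserted degree; hence $s_X(2) \le g_x - 2g_y - 1 + 2\gon(Y)$.

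Alternatively, and perhaps more transparently for a write-up, one can bypass the explicit link and argue via linear series: the birational map $\phi \times \psi$ shows that the linear series $\linsys{\phi^{\ast}\mathcal{O}_{\PP^1}(1) + \psi^{\ast}\mathcal{O}_{\PP^1}(1)}$ — a sub-linear-series of which has dimension at least $(1+1) + (1+1) - 1 = 3$ after accounting for the $\PP^1 \times \PP^1 \hookrightarrow \PP^3$ Segre embedding — gives a birational embedding of $X$ into $\PP^3$ as a curve of degree $(g_x - 2g_y + 1) + 2\gon(Y)$ lying on a smooth quadric. Projecting from a general point of this space curve yields a plane model of degree one less, namely $g_x - 2g_y - 1 + 2\gon(Y)$, which is birational to $X$ because the projection of a smooth space curve from a general point of itself is birational onto its image. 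Either route reduces the theorem to the content already established in Proposition~\ref{lemma:phi*psi-is-birational}.

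The main obstacle I expect is not in any single step but in verifying cleanly that the projection (or the link $\beta$) stays birational and produces exactly the claimed degree without losing a unit or picking up an unwanted factor: one must be sure that the chosen center $P_0$ (or projection center) is generic enough that $X'$ has multiplicity exactly $1$ there and that no two branches of $X'$ become identified under the contraction, and one must correctly bookkeep how contracting each of the two fibers through $P_0$ affects the degree. Since all the genuinely curve-theoretic difficulty — producing the pair of maps whose product is birational — has already been dispatched in Proposition~\ref{lemma:phi*psi-is-birational}, this final theorem should be a short deduction once the degree count is pinned down.
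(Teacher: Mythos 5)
Your overall route coincides with the paper's: get the bidegree $(g_x-2g_y+1,\,2\gon(Y))$ model $X'$ on $\PP^1\times\PP^1$ from Proposition~\ref{lemma:phi*psi-is-birational}, then pass to $\PP^2$ by projecting the quadric from a point (your link $\beta$, blowing up $P_0$ and contracting the two rulings through it, is exactly the resolution of that projection). But there is a genuine gap in your degree count, and it is not a minor bookkeeping issue: you choose $P_0$ to be a \emph{general, smooth} point of $X'$, i.e.\ of multiplicity $1$. The degree of the image of a curve of bidegree $(a,b)$ under this link is $a+b-m$, where $m$ is the multiplicity of the curve at the center. With $m=1$ you get $(g_x-2g_y+1)+2\gon(Y)-1=g_x-2g_y+2\gon(Y)$, which is one \emph{more} than the asserted bound; your displayed identity $(g_x-2g_y+1)+2\gon(Y)-1=g_x-2g_y-1+2\gon(Y)$ is simply false. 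The same off-by-one occurs in your alternative argument: the Segre image of $X'$ in $\PP^3$ has degree $g_x-2g_y+1+2\gon(Y)$, and projection from a general (smooth) point of it again only drops the degree by one.

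What is missing is the observation that the center must be taken at a \emph{singular} point of $X'$, so that $m\ge 2$ and the degree drops by at least two. That such a point exists is precisely where the numerical hypothesis is used beyond invoking Proposition~\ref{lemma:phi*psi-is-birational}: a curve of bidegree $(a,b)$ on $\PP^1\times\PP^1$ has arithmetic genus $(a-1)(b-1)=(g_x-2g_y)(2\gon(Y)-1)$, and since $g_x\ge 8g_y+2$ (and $\gon(Y)\ge 2$) this exceeds $g_x$, forcing $X'$ to be singular. Projecting from such a singular point $s$ then gives a plane model of degree at most $g_x-2g_y+1+2\gon(Y)-\multiplicity_s(X')\le g_x-2g_y-1+2\gon(Y)$, and birationality of the projection on $X'$ is preserved since the projection is birational on the quadric itself. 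With that correction your argument becomes the paper's proof; as written, it only yields $s_X(2)\le g_x-2g_y+2\gon(Y)$.
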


\begin{proof}
From Proposition~\ref{lemma:phi*psi-is-birational} there is a model $\Gamma$ of $X$ on $\PP^1 \times \PP^1$ of bidegree $(g_x - 2g_y + 1, 2 \gon(Y))$. Since $g_x \ge 8g_y + 2$, it follows that
\begin{equation*}
(g_x - 2g_y)(2\gon(Y)-1) > g_x.
\end{equation*}
Therefore the model $\Gamma$ has a singular point $s$.

Embed $\PP^1 \times \PP^1$ as a smooth quartic $Q$ in $\PP^3$. Then $\Gamma$ has degree $g_x - 2g_y + 1 + 2\gon(Y)$ and the projection with center $s$ gives rise to a birational equivalence between $Q$ and $\PP^2$. The closure of the image of $\Gamma$ is a plane model of $X$ of degree $g_x - 2g_y - 1 + 2\gon(Y)$; hence $s_{X}(2) \le g_x - 2g_y - 1 + 2\gon(Y)$.
\end{proof}

\begin{remark}\label{remark:hyperelliptic-case}\hfill
\begin{enumerate}[(a)]
\item In case $Y$ is hyperelliptic, we have $s_{X}(2) = g_x - 2g_y +3$ if $g_x \ge 8g_y + 2$.

\item It would be an intriguing problem finding an example where $s_{X}(2) > g_x - 2g_y + 1 + \gon(Y)$.
\end{enumerate}
\end{remark}


\section{Double covers with plane models of minimal possible degree}\label{sec:minimal-possible}

Let $Y$ be a smooth curve of genus $g_y \ge 2$. In this section, we construct a double cover $X$ of $Y$ with genus $g_x$ which has a plane model of the minimal possible degree $g_x - 2g_y + 1 + \mathrm{gon}(Y)$.

\begin{theorem}\label{theorem:example}
Let $Y$ be a smooth curve of genus $g_y \ge 2$. For any integer $g_x$ with $g_x \ge 4g_y - 1$, there exists a smooth double cover $X$ of $Y$ with genus $g_x$ such that $s_X(2) = g_x - 2g_y + 1 + \gon(Y)$.
\end{theorem}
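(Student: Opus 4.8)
The plan is to realize the desired double cover $X$ as (a resolution of) a curve sitting on a ruled surface over $Y$, engineered so that it carries a non-primitive complete pencil $\linsys{D}$ of degree $g_x - 2g_y + \gon(Y)$ whose dual $\linsys{K_X-D}$ has a base point $Q$; then $\linsys{D+Q}$ is the sought plane model. Concretely, let $\psi_k : Y \to \PP^1$ be a pencil of degree $k = \gon(Y)$ and write $G_1 = \psi_k^{\ast}(\mathrm{pt})$, a base-point-free $g_k^1$ on $Y$. Choose an effective divisor $M \in \divisorgroup(Y)$ of degree $d := g_x - 2g_y + k$ of the form $M = B + N$ where $\deg B = g_x - 2g_y + 1$ (so $\deg N = k - 1$), with $B$ chosen so that $2B$ is a branch divisor realizing a double cover $f : X \to Y$ of genus $g_x$ (this uses only that $g_x \ge 4g_y - 1$ makes $\deg B \ge 2g_y + 1 > 0$, so general effective $B$ of this degree works; any $2B$-branched double cover is smooth and irreducible). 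Put $\PP = \PP(\sheaf{O_Y}\oplus\sheaf{O_Y}(-M))$ with projection $\pi_M$ and minimal section $S_M$, and seek an embedding $j : X \to \PP$ with $\pi_M\circ j = f$ and $j(X) \lineqv 2S_M + \pi_M^{\ast}(2M)$. The existence of such a $j$ is exactly the reverse of Lemma~\ref{lemma:j_M-construction}: starting from the decomposable $\PP$, one recovers a curve in the class $2S_M + \pi_M^{\ast}(2M)$ whose double-cover structure over $Y$ is $f$, provided the numerics $\deg M \ge \deg B$ hold, which they do.

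The heart of the construction is arranging a section $T$ of $\PP$ with $j^{\ast}T = D$ a base-point-free pencil of degree $d$: one wants $\linsys{D} = \linsys{j^{\ast}T}$ to move in a pencil, which forces $f_{\ast}D \in \linsys{M}$ to move, i.e. one should take $M$ so that $h^0(Y,\sheaf{O_Y}(M)) \ge 2$ — for instance $M = B + N$ with $N$ chosen inside $\linsys{G_1 - \mathrm{pt}}$ so that $\linsys{M}$ contains the one-dimensional family coming from the $g_k^1$. The key point is that $\linsys{D}$ is \emph{not} primitive: by the computation already carried out inside the proof of Proposition~\ref{proposition:primitive}, $\linsys{K_X - D} \supseteq \linsys{f^{\ast}M - D} + \linsys{f^{\ast}(K_Y - N)}$ with equality governed by the conductor, and here $\deg N = k-1 = \gon(Y)-1 > \gon(Y)-2$, so $\linsys{K_Y - N}$ acquires a base point $q \in Y$; any point $Q \in f^{-1}(q)$ is then a base point of $\linsys{K_X - D}$, so $\linsys{D}$ is non-primitive. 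Adding $Q$ gives a complete net $\linsys{D+Q}$ of degree $d+1 = g_x - 2g_y + 1 + \gon(Y)$; one checks it is base-point-free (the only candidate base point was removed) and simple — i.e. $\phi_{\linsys{D+Q}}$ is birational — by verifying that $\linsys{D+Q}$ separates two general points, which follows since $\linsys{D}$ already separates a general pair of fibers of $f$ together with the extra point $Q$ breaking the $\sigma$-symmetry. This produces a plane model of degree $g_x - 2g_y + 1 + \gon(Y)$, hence $s_X(2) \le g_x - 2g_y + 1 + \gon(Y)$; the reverse inequality is Theorem~\ref{theorem:lower-bound} once $g_x \ge 4g_y - 2$, which is implied by $g_x \ge 4g_y - 1$.

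I expect the main obstacle to be the \emph{simplicity} of the net $\linsys{D+Q}$ — showing $\phi_{\linsys{D+Q}}$ is birational onto its image rather than, say, 2:1 onto a rational or lower-genus curve. The pencil $\linsys{D}$ by itself has a $\PP^1$ image and could a priori compose with $f$ or with some other covering; one must use that $\linsys{D}$ is not induced by $f$ (arrange $\deg M = d < $ appropriate bound, or choose $T$ generically so $j^{\ast}T$ is not a pullback) and that the birational model $j : X \to \PP$ already has $j(X)$ birational to $X$, so that adding the point $Q$ — which lies on exactly one of the two points of a general fiber of $f$ — kills any residual involution. A secondary technical point is checking the dimension count $h^0(X,\sheaf{O_X}(D+Q)) = 3$ exactly (no unexpected jump), which follows from \eqref{equation:H^0(X, f^*(M))} applied to $M$ together with the base-point analysis of $\linsys{K_X-D}$; and one must confirm that for every admissible $g_x$ (not just generic ones) a suitable $B$, $N$, and $T$ can be chosen, which is where the lower bound $g_x \ge 4g_y - 1$ on the genus is used to guarantee enough room in the relevant linear systems on $Y$.
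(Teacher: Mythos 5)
Your overall strategy (produce a non-primitive complete pencil $\linsys{D}$ of degree $g_x-2g_y+\gon(Y)$, then add a base point of $\linsys{K_X-D}$ to get the net) is the one announced in the paper's introduction, and invoking Theorem~\ref{theorem:lower-bound} for the reverse inequality is fine. But the heart of the matter --- actually constructing $X$ together with such a pencil --- is not carried out, and the mechanism you propose cannot work as stated. First, there is no \emph{embedding} $j:X\to\PP(\sheaf{O_Y}\oplus\sheaf{O_Y}(-M))$ with $j(X)\lineqv 2S_M+\pi_M^{\ast}(2M)$: by adjunction a curve in that class has arithmetic genus $\deg M+2g_y-1=g_x+\gon(Y)-1>g_x$, so the image is forced to be singular with conductor of degree $2(\gon(Y)-1)$ (this is exactly the content of Lemma~\ref{lemma:j_M-conductor}). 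Consequently ``the reverse of Lemma~\ref{lemma:j_M-construction}'' is not a construction: Lemma~\ref{lemma:j_M-construction} starts from a pencil you do not yet have, and reversing it means exhibiting an irreducible member of $\linsys{2S_M+\pi_M^{\ast}(2M)}$ with prescribed singularities whose normalization is a genus-$g_x$ double cover of $Y$ carrying the desired pencil --- which is precisely what has to be proved. Second, your ``section $T$ with $j^{\ast}T=D$ of degree $d$'' is numerically impossible: a section $T\lineqv S_M+\pi_M^{\ast}M$ meets $j(X)$ in degree $2\deg M$, while $S_M\cdot j(X)=0$; in the paper's setting such a $T$ cuts out $D+D'$ (Lemma~\ref{lemma:j_M-configuration}), never $D$ alone. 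Finally, you yourself flag base-point-freeness and simplicity of $\linsys{D+Q}$ as the main obstacle and leave them unresolved, so the upper bound is not established.

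For comparison, the paper's proof handles exactly these points by an explicit construction. With $k=\gon(Y)$, $L=p_1+\dotsb+p_{k-1}$ part of a $g^1_k(Y)$ and $d'=g_x-2g_y+1-k$, it takes a general member $X_0$ of the system of curves in $\linsys{2S_L+\pi_L^{\ast}(2L+N)}$ on $\PP(\sheaf{O_Y}\oplus\sheaf{O_Y}(-L))$ passing through assigned points $P_k,Q_1,\dotsc,Q_{d'}$ on $S_L$; Bertini plus the intersection number $X_0\cdot S_L=d'+1$ gives $X_0$ smooth and irreducible of genus exactly $g_x$ (this is where $g_x\ge 4g_y-1$ is used, to make $\linsys{S_L+\pi_L^{\ast}M}$ base-point-free). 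Elementary transformations centered off $X_0$ (Lemma~\ref{lemma:strict-transform-of-X}) then move the picture to $Y\times\PP^1$, where the second projection cuts out the base-point-free pencil $\linsys{D}=\linsys{f^{\ast}(p_1+\dotsb+p_{k-1})+P_k+Q_1+\dotsb+Q_{d'}}$, and adding the conjugate point $\overline{P_k}$ yields the simple net $\linsys{f^{\ast}g^1_k(Y)+Q_1+\dotsb+Q_{d'}}$ of degree $g_x-2g_y+1+\gon(Y)$, the dimension jump coming from the fact that the $Q_i$ are not base points. Without an argument of this kind --- establishing existence of the pencil, the singular model, and the simplicity of the resulting net --- your proposal does not prove the theorem.
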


\begin{proof}
Let $k = \gon(Y)$. Choose a divisor $p_1 + \dotsb + p_k \in g_k^1(Y)$ consisting of different points. Set $L = p_1 +\dotsb + p_{k-1}$ and $N = p_k + q_1 + \dotsb q_{d'}$, where $q_1, \dotsc, q_{d'}$ ($d' = g_x - 2g_y + 1 - k$) are any different points in $Y$. Set $M = 2L + N$.

Let $S_L$ be the minimal degree section of a ruled surface $\pi_L : \PP(\sheaf{O_Y} \oplus \sheaf{O_Y}(-L)) \to Y$. Let $P_i$ ($i=1, \dotsc, k$) and $Q_j$ ($j=1, \dotsc, d'$) be the points on $S_L$ such that $\pi_L(P_i) = p_i$ and $\pi_L(Q_j) = q_j$, respectively. On the ruled surface $\PP(\sheaf{O_Y} \oplus \sheaf{O_Y}(-L))$ we define a linear system
\begin{equation*}
\mathbb{X} = \{ X \in \linsys{2S_L + \pi_L^{\ast}{M}} : \text{$X$ passes through $P_k, Q_1, \dotsc, Q_{d'}$} \}.
\end{equation*}

We have two sublinear systems of $\mathbb{X}$:
\begin{align*}
\mathbb{H} &= \{ S_L + H : H \in \linsys{S_L + \pi_L^{\ast}{M}} \}, \\
\mathbb{T} &= \{ T_1 + T_2 + \pi_L^{\ast}{N} : T_1, T_2 \in \linsys{S_L + \pi_L^{\ast}{L}} \}.
\end{align*}
By the assumption $g_x \ge 4g_y - 1$, the linear system $\linsys{S_L + \pi_L^{\ast}{M}}$ is base-point-free. We may choose an element $H \in \linsys{S_L + \pi_L^{\ast}{M}}$ so that $H$ does not pass through any of the points $P_k, Q_1, \dotsc, Q_{d'}$. We may also choose $T_i$ ($i=1,2$) so that $T_i \cap S_L = \varnothing$ ($i=1,2$). Therefore $P_k, Q_1, \dotsc, Q_{d'}$ are the only base points of the linear system $\mathbb{X}$. By the Bertini theorem of characteristic zero, a general member of $\mathbb{X}$ can have singularities only at the base points $P_k, Q_1, \dotsc, Q_{d'}$. Let $X_0$ be a general member of $\mathbb{X}$. Since $X_0.S_L = (2S_L + \pi_L^{\ast}{M}).S_L = d'+1$, it follows that $X_0$ cannot have singularities at any points of the base points $P_k, Q_1, \dotsc, Q_{d'}$, which implies that $X_0$ is irreducible and smooth. Choose a section $S$ of $\PP(\sheaf{O_Y} \oplus \sheaf{O_Y}(-L))$ such that $S \cap S_L = \varnothing$. Let $P_i'$ ($i=1, \dotsc, k-1$) be the points on $S$ such that $\pi_L(P_i') = p_i$. Since $P_i$ and $P_i'$ ($i=1, \dotsc, k-1$) are not base points of $\mathbb{X}$, we may assume that $X_0$ does not pass through any of $P_i, P_i'$.

Let $L' = P_1'+\dotsb+P_{k-1}'$. Set $X = \elm_{L'}(X_0)$ and $S_0 = \elm_{L'}(S_L)$. By Lemma~\ref{lemma:strict-transform-of-X}(\ref{item:P-not-in-T-cap-h(X)}), $\elm_{L'}(\PP(\sheaf{O_Y} \oplus \sheaf{O_Y}(-L)))= \PP(\sheaf{O_Y} \oplus \sheaf{O_Y}) = Y \times \PP^1$ and $X \lineqv 2S_0 + \pi_0^{\ast}{M}$. Hence the restriction $f = \pi_0|_{X} : X \to Y$ of the first projection $\pi_0 : Y \times \PP^1 \to Y$ is a double covering. Since $\supp(L') \cap X = \varnothing$, $X$ has singular points at $P_1, \dotsc, P_{k-1}$. Therefore $S_0|_X = f^{\ast}(p_1+\dotsb+p_{k-1}) + P_k + Q_1 + \dotsb + Q_{d'}$. Since $S_0$ is a fiber of the second projection $Y \times \PP^1 \to \PP^1$, the linear series $\linsys{D} = \linsys{f^{\ast}(p_1+\dotsb+p_{k-1}) + P_k + Q_1 + \dotsb + Q_{d'}}$ is base-point-free.

Let $\overline{P_k}$ be the conjugate point of $P_k$ with respect to the double covering $f : X \to Y$. Set $\linsys{E} = \linsys{D + \overline{P_k}} = \linsys{f^{\ast}{g_k^1(Y)} + Q_1 + \dotsb + Q_{d'}}$. Since $Q_1 + \dotsb + Q_{d'}$ are not base points of $\linsys{E}$, we have $\dim{\linsys{E}} > \dim{\linsys{D}}$. Therefore $\linsys{E}$ is a base-point-free simple net on $X$ of degree $g_x - 2g_y + 1 + k$.
\end{proof}


\section{Double covers of hyperelliptic curves}\label{sec:hyperelliptic}

In section~\ref{sec:minimal-possible}, we constructed a double cover with a simple net of the form $\linsys{f^{\ast}{g_k^1(Y)} + Q_1 + \dotsb + Q_{d'}}$ ($d' = g_x - 2g_y + 1 - k$). Assume that $Y$ is hyperelliptic with $g_y \ge 2$. Then, according to Remark~\ref{remark:hyperelliptic-case}(a), we have $s_X(2) = g_x - 2g_y + 3$. In this section we prove that every simple net of degree $s_X(2) = g_x - 2g_y + 3$ on $X$ is of the form
\begin{equation}\label{equation:of-the-form}
\linsys{f^{\ast}{g_2^1(Y)} + Q_1 + \dotsb + Q_{d'}}
\end{equation}
where $d' = g_x - 2g_y - 1$.

\begin{lemma}[{Accola~\cite[5.1]{Accola-1979}}]\label{lemma:dim(g_l^t+g_m^s)}
Suppose $g_{l}^{t}$ is simple and $g_{m}^{s}$ is a different linear series, possibly composite, so that $t \ge s$. Then $\dim \linsys{g_{l}^{t} + g_{m}^{s}} \ge t + 2s$.
\end{lemma}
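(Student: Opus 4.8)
The plan is to reduce the lemma to a statement about the multiplication map on global sections and then induct on $s$. First I would strip off the base loci of both series: deleting the fixed divisor of $g_{l}^{t}$ or of $g_{m}^{s}$ only subtracts the same divisor from $\linsys{g_{l}^{t}+g_{m}^{s}}$ and does not affect the hypotheses — in particular the morphism attached to $g_{l}^{t}$, hence its simplicity, is unchanged — so I may assume both series base-point-free. Write $V\subseteq H^{0}(X,L)$ and $W\subseteq H^{0}(X,L')$ for the subspaces defining them, so $\dim V=t+1$, $\dim W=s+1$, and $\linsys{g_{l}^{t}+g_{m}^{s}}$ is the projectivization of $U:=\langle V\cdot W\rangle$, the linear span of the products inside $H^{0}(X,L\otimes L')$. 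Thus it suffices to prove $\dim U\ge t+2s+1$, which I would do by induction on $s$, the case $s=0$ being trivial since $U\supseteq V\cdot w$ for any $0\ne w\in W$.

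For the base case $s=1$, pick two general members $D_{0}=\operatorname{div}(w_{0})$ and $D_{1}=\operatorname{div}(w_{1})$ of the pencil $g_{m}^{1}$; since $g_{m}^{1}$ is base-point-free (and $X$ is not rational in our setting) these are disjoint and each consists of at least two distinct points. Then $U=V w_{0}+V w_{1}$, and a short divisor computation gives $V w_{0}\cap V w_{1}\subseteq V(-D_{1})\cdot w_{0}$, where $V(-D_{1})$ denotes the sections of $g_{l}^{t}$ vanishing on $D_{1}$; hence $\dim U\ge 2(t+1)-\dim V(-D_{1})$. This is where simplicity enters: as $\phi_{g_{l}^{t}}$ is birational onto its image and $D_{1}$ is general, its distinct points lie in the injective locus of $\phi_{g_{l}^{t}}$ and map to distinct, hence linearly independent, points of $\PP^{t}$, so $D_{1}$ imposes at least two conditions on $g_{l}^{t}$ and $\dim V(-D_{1})\le t-1$. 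Therefore $\dim U\ge t+3$, i.e. $\dim\linsys{g_{l}^{t}+g_{m}^{1}}\ge t+2$.

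For the inductive step ($s\ge 2$), fix a general point $P\in X$, set $W'=W(-P)=\{w\in W: w(P)=0\}$ (of dimension $s$), and let $g'$ be the moving part of $g_{m}^{s}-P$; then $g'$ has projective dimension $s-1$, hence is distinct from $g_{l}^{t}$, and the inductive hypothesis applied to $g_{l}^{t}$ and $g'$ (legitimate since $t\ge s>s-1$) gives $\dim\linsys{g_{l}^{t}+g'}\ge t+2(s-1)$. Since $U':=\langle V\cdot W'\rangle$ spans $\linsys{g_{l}^{t}+(g_{m}^{s}-P)}$, this reads $\dim U'\ge t+2s-1$. Choosing $w_{\infty}\in W$ with $w_{\infty}(P)\ne 0$, we have $U=U'+V w_{\infty}$, so
\[
\dim U=\dim U'+(t+1)-\dim(U'\cap V w_{\infty}),
\]
and the step reduces to $\dim(U'\cap V w_{\infty})\le t-1$, i.e. to showing that $\{\,f\in V: f w_{\infty}\in U'\,\}$ has codimension at least $2$ in $V$. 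One unit is free: every section in $U'$ vanishes at $P$ while $(f w_{\infty})(P)=f(P)\,w_{\infty}(P)$, so $f w_{\infty}\in U'$ already forces $f\in V(-P)$.

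The remaining unit — ruling out $V(-P)\,w_{\infty}\subseteq U'$ — is the heart of the matter, and the step I expect to be the main obstacle. Here I would again use birationality of $\phi_{g_{l}^{t}}$: restrict everything to a general member $D_{\infty}=\operatorname{div}(w_{\infty})$ of $g_{m}^{s}$, chosen reduced, disjoint from all base loci, and inside the injective locus of $\phi_{g_{l}^{t}}$ and of $\phi_{g_{m}^{s}}$. Then $V(-P)w_{\infty}$ restricts to $0$ on $D_{\infty}$, so $V(-P)w_{\infty}\subseteq U'(-D_{\infty})$ and $\dim U'(-D_{\infty})\ge t$; on the other hand the $m$ distinct points of $D_{\infty}$ map to distinct points under $\phi_{g_{l}^{t}}$ (and under $\phi_{g_{m}^{s}}$), and a careful count of the conditions they impose on $U'$, combined with $\dim U'\ge t+2s-1$, should force $\dim U'(-D_{\infty})<t$ — the desired contradiction, and the place where the argument most needs care. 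The case in which $g_{m}^{s}$ is composite, say $g_{m}^{s}=\alpha^{\ast}\gamma$ with $\deg\alpha\ge 2$, runs the same way once ``general member of $g_{m}^{s}$'' is read as ``pull-back under $\alpha$ of a general member of $\gamma$''. Combining the base case with the inductive step yields $\dim U\ge t+2s+1$ for all $s$, as required.
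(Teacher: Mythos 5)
The paper itself offers no proof of this lemma---it is quoted verbatim from Accola \cite[5.1]{Accola-1979}---so your argument has to stand on its own, and it does not: the step you yourself flag as ``the heart of the matter'' is a genuine gap, not a matter of care. Your induction reduces everything to the claim that $V(-P)\,w_\infty \not\subseteq U'$, and this is exactly the point where the hypothesis that $g_m^s$ is \emph{different} from $g_l^t$ must enter; but nothing in your argument ever uses that hypothesis (in the base case it is automatic, since a simple series on a non-rational curve has $t\ge 2$, and in the inductive step you invoke it only for the lower-dimensional series $g'$, where it is again automatic). Consequently no condition-count of the kind you sketch can close the gap, because such a count is blind to whether the two series coincide, while the claim is simply false when they do: if $W=V$ then $w_\infty\in V$, so $V(-P)\,w_\infty\subseteq \langle V\cdot V(-P)\rangle = U'$ trivially, and indeed the conclusion itself fails there---for $V=W=H^0(K_X)$ on a non-hyperelliptic curve of genus $3$ one has $t=s=2$ and, by Max Noether's theorem, $\dim\langle V\cdot W\rangle = \dim\image\bigl(\symmetric^2 H^0(K_X)\to H^0(2K_X)\bigr)=6=3t<t+2s+1$. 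Moreover, even granting ``different'', the count you envisage would need the $m$ points of $D_\infty$ to impose at least $2s$ independent conditions on the subspace $U'$ (to push $\dim U'\ge t+2s-1$ down to $\dim U'(-D_\infty)\le t-1$); this already presupposes $m\ge 2s$ and an independence statement that is essentially the lemma itself, neither of which you establish.

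There is also a structural problem upstream: the opening reduction ``it suffices to prove $\dim U\ge t+2s+1$ for the span of products'' is too lossy. Take again a non-hyperelliptic curve of genus $3$ and the two \emph{different} series $g_5^2=\linsys{K_X}+P$ and $g_5^2=\linsys{K_X}+Q$ ($P\ne Q$): the first is simple, $t=s=2$, yet the span of products is $s_Ps_Q\cdot\langle H^0(K_X)^2\rangle$, of projective dimension $5<t+2s=6$, while the complete series $\linsys{2K_X+P+Q}$ has dimension $7$, so the lemma holds only by virtue of completeness. Equivalently, your base-point-stripping step can turn two different series into one and the same base-point-free series, at which point the lemma's hypotheses (and its bound) fail for the moving parts, so that reduction is not innocent either. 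In short: the base case $s=1$ is correct (it is the standard base-point-free pencil trick computation using that a simple series separates two general points), but the induction as designed cannot be completed along the lines you sketch; a correct proof must use the completeness of $\linsys{g_l^t+g_m^s}$ (or treat the shared-moving-part case separately) and must bring the ``different series'' hypothesis to bear precisely at the step you left open.
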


\begin{lemma}[{Accola~\cite[5.3]{Accola-1979}}]\label{lemma:dim(g_l^t-g_m^1)}
Let $g_l^t$ and $g_m^1$ be base-point-free linear series. Assume that $g_l^t$ is simple. If $\dim \linsys{g_l^t + ng_m^1} = t + 2n$, then $\dim{\linsys{g_l^t -g_m^1}} = t - 2$.
\end{lemma}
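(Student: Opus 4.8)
The plan is to reduce the hypothesis from $ng_m^1$ to $g_m^1$ by iterating Lemma~\ref{lemma:dim(g_l^t+g_m^s)}, and then to read off $\dim\linsys{g_l^t-g_m^1}$ from $\dim\linsys{g_l^t+g_m^1}$ by the base-point-free pencil trick.

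For the reduction I would first observe, by an induction on $k$ using Lemma~\ref{lemma:dim(g_l^t+g_m^s)}, that for every $k\ge1$ the series $g_l^t+kg_m^1$ is simple of dimension $\ge t+2k$: fixing general divisors $F_1,\dots,F_k\in g_m^1$ exhibits $g_l^t+F_1+\dots+F_k$ as a subseries of $g_l^t+kg_m^1$ whose moving part is $g_l^t$, so the morphism of $g_l^t+kg_m^1$ has the birational-onto-its-image morphism $\phi_{g_l^t}$ as a linear projection and is therefore itself birational onto its image. Since then $\dim\linsys{g_l^t+kg_m^1}\ge t+2k\ge1$, I may apply Lemma~\ref{lemma:dim(g_l^t+g_m^s)} to the pair $\bigl(g_l^t+kg_m^1,\,g_m^1\bigr)$ and obtain $\dim\linsys{g_l^t+(k+1)g_m^1}\ge\dim\linsys{g_l^t+kg_m^1}+2$. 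Summing over $k=1,\dots,n-1$ gives $\dim\linsys{g_l^t+ng_m^1}\ge\dim\linsys{g_l^t+g_m^1}+2(n-1)$, which together with the hypothesis $\dim\linsys{g_l^t+ng_m^1}=t+2n$ forces $\dim\linsys{g_l^t+g_m^1}\le t+2$; since the reverse inequality is Lemma~\ref{lemma:dim(g_l^t+g_m^s)} applied directly to $g_l^t$ and $g_m^1$, this pins down $\dim\linsys{g_l^t+g_m^1}=t+2$.

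It then remains to prove the identity $\dim\linsys{g_l^t+g_m^1}+\dim\linsys{g_l^t-g_m^1}=2t$. Here I would choose divisors $D$, $G$ carrying $g_l^t$, $g_m^1$, regard $g_l^t$ as a base-point-free subspace $V\subseteq H^0(C,\sheaf{O_C}(D))$ with $\dim V=t+1$ and $g_m^1$ as $W=\langle s_0,s_1\rangle\subseteq H^0(C,\sheaf{O_C}(G))$ with $s_0,s_1$ having no common zero, and study the multiplication map $\mu\colon V\otimes W\to H^0(C,\sheaf{O_C}(D+G))$, so that $\dim\linsys{g_l^t+g_m^1}=\dim(\image\mu)-1$. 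Writing a general element of $V\otimes W$ uniquely as $v_0\otimes s_0+v_1\otimes s_1$ with $v_i\in V$, membership in $\ker\mu$ means $v_0s_0=-v_1s_1$; the absence of common zeros of $s_0,s_1$ forces $v_0=s_1u$ and $v_1=-s_0u$ for a regular section $u\in H^0(C,\sheaf{O_C}(D-G))$, and then $v_0,v_1\in V$ becomes $s_0u,s_1u\in V$. Thus $\ker\mu$ is identified with $U:=\{u\in H^0(C,\sheaf{O_C}(D-G)):s_0u\in V\text{ and }s_1u\in V\}$, whose projectivization $\PP(U)$ is exactly the linear series $g_l^t-g_m^1$. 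Rank--nullity gives $\dim\linsys{g_l^t+g_m^1}=2(t+1)-\dim U-1$ and $\dim\linsys{g_l^t-g_m^1}=\dim U-1$, hence the identity, and finally $\dim\linsys{g_l^t-g_m^1}=2t-(t+2)=t-2$.

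I expect the main obstacle to be the bookkeeping in the kernel computation: verifying that $v_0/s_1$ is a genuine regular section of $\sheaf{O_C}(D-G)$, which rests on base-point-freeness of $g_m^1$ and, tacitly, on $l\ge m$ (in the contrary case $l<m$ one should note separately that $g_l^t-g_m^1$ is empty and that the hypothesis cannot hold for $t\ge2$, so the statement is vacuous there), and that $\PP(U)$ really is the linear series $g_l^t-g_m^1$ in Accola's sense, i.e.\ the largest $g$ with $g+g_m^1\subseteq g_l^t$. A lesser point is the claim used in the reduction that a base-point-free series containing a simple subseries is simple, which I would justify via the factorization of the associated morphisms by linear projections.
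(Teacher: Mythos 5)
The paper does not prove this lemma at all (it is quoted from Accola), so your argument has to stand on its own. Your Step 1 is fine: the partial sums $\linsys{g_l^t+kg_m^1}$ are simple because they contain $g_l^t$ plus fixed divisors, iterating Lemma~\ref{lemma:dim(g_l^t+g_m^s)} gives $\dim\linsys{g_l^t+ng_m^1}\ge\dim\linsys{g_l^t+g_m^1}+2(n-1)$, and the hypothesis then pins down $\dim\linsys{g_l^t+g_m^1}=t+2$ (the same iteration, run from the complete series $\linsys{D}$, also forces $g_l^t$ to be complete, a point you should make explicit since it is what identifies your $\PP(U)$ with the complete series $\linsys{g_l^t-g_m^1}$ that the paper actually uses in Theorem~\ref{theorem:main-theorem-hyperelliptic-case}).

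The gap is in Step 2, at the very first equality $\dim\linsys{g_l^t+g_m^1}=\dim(\image\mu)-1$. Everywhere else in your argument (and in the paper's application) the sum is the \emph{complete} series, so $\dim\linsys{g_l^t+g_m^1}=h^0(D+G)-1$, whereas $\image\mu$ is merely a subspace of $H^0(D+G)$: your equality is precisely the assertion that the multiplication map $\mu\colon V\otimes W\to H^0(D+G)$ is surjective, and you never prove it. Without surjectivity, rank--nullity together with $\image\mu\subseteq H^0(D+G)$ only gives $\dim U\ge 2(t+1)-h^0(D+G)=t-1$, i.e.\ the lower bound $\dim\linsys{g_l^t-g_m^1}\ge t-2$; the upper bound, which is the substantive half of the lemma, is exactly the missing surjectivity. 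This is not a formal consequence of Lemma~\ref{lemma:dim(g_l^t+g_m^s)} as stated, which bounds the complete series $\linsys{g_l^t+g_m^1}$ and says nothing about the span of the divisors $D'+G'$; and it is false without the simplicity hypothesis (e.g.\ when $g_l^t$ is composed with $g_m^1$), so it genuinely needs an argument. The repair is to invoke the strong form of Accola's lemma --- \emph{any} linear series containing all divisors $D'+G'$, $D'\in g_l^t$, $G'\in g_m^1$, has dimension at least $t+2$ --- applied to $\PP(\image\mu)$: this gives $\dim\image\mu\ge t+3$, which combined with your Step 1 computation $h^0(D+G)=t+3$ yields surjectivity, after which your kernel identification $\ker\mu\cong H^0(D-G)$ (using completeness of $g_l^t$) gives $h^0(D-G)=2(t+1)-(t+3)=t-1$ as desired.
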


\begin{theorem}\label{theorem:main-theorem-hyperelliptic-case}
Let $X$ be a smooth curve of genus $g_x$ which is a double cover of a hyperelliptic curve $Y$ of genus $g_y \ge 2$. Then every simple net $\linsys{D}$ of degree $s_X(2)(= g_x - 2g_y + 3)$  on $X$ is of the form
\begin{equation*}
\linsys{f^{\ast}{g_2^1(Y)} + Q_1 + \dotsb + Q_{d'}}
\end{equation*}
for some $R_i \in X$, where $g_2^1(Y)$ is a base-point-free pencil on $Y$.
\end{theorem}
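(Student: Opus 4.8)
The plan is to start from a simple net $\linsys{D}$ with $\deg D = g_x - 2g_y + 3$ and $\dim\linsys{D} = 2$, subtract a general point to get a base-point-free pencil, and use the machinery of \S\ref{sec:primitive} to pin down its structure. First I would set $g_d^2 = \linsys{D}$ and, subtracting a general point $P$, obtain $g_{d-1}^1 = \linsys{D - P}$, which is base-point-free by generality and (as in the proof of Theorem~\ref{theorem:lower-bound}) cannot be primitive and is not induced by $f$ (Lemma~\ref{lemma:subtracting=>not-induced}, Coppens-Keem-Martens~\cite[1.1.3]{Coppens-Keem-Martens-1992}). Now $d - 1 = g_x - 2g_y + 2 = (g_x - 2g_y - 1 + \gon(Y)) + 1$ since $\gon(Y) = 2$, so the bound $d - 1 \le g_x - 2g_y - 1 + \gon(Y)$ of Proposition~\ref{proposition:primitive} \emph{fails by exactly one}. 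The idea is to run the argument of Lemma~\ref{lemma:j_M-configuration} and Proposition~\ref{proposition:primitive} anyway: build the ruled surface $\PP = \PP(\sheaf{O_Y}\oplus\sheaf{O_Y}(-M))$ with $M \lineqv f_\ast(D-P)$ via Lemma~\ref{lemma:j_M-construction}, note $\deg N = \deg(M - B) = \gon(Y) - 1 = 1$ by Castelnuovo-Severi (Lemma~\ref{lemma:CS-inequality}), so $N$ is a single point. The only way $\linsys{K_X - (D-P)}$ can have a base point is through $\linsys{f^\ast(K_Y - N)}$, i.e. through $N$ being a base point of $\linsys{K_Y - N}$; since $\deg N = 1$ and $Y$ is hyperelliptic, $h^0(Y,\sheaf{O_Y}(N)) = h^0(Y,\sheaf{O_Y}(N + y))$ fails exactly when $N + y \in g_2^1(Y)$, forcing $N$ (and hence the conductor $\Delta = f^\ast N$) to be one of the two ramification-adjacent points of the $g_2^1$.

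Next I would convert this into a statement about $\linsys{D}$ itself. The failure of primitivity of $g_{d-1}^1$ means $\linsys{K_X - D + P}$ has a base point; tracking it through the decomposition $\linsys{K_X - (D-P)} \supset \linsys{f^\ast M - (D-P)} + \linsys{f^\ast(K_Y-N)}$, the base point lives on the $f^\ast(K_Y-N)$ factor and equals a point $Q$ with $f(Q)$ in the support of the forced $g_2^1$-configuration. Adding $Q$ to $g_{d-1}^1$ recovers a subseries of $\linsys{D}$, and the relation $Q + (\text{conjugate}) \lineqv f^\ast(g_2^1(Y))$ should let me write $\linsys{D} = \linsys{f^\ast g_2^1(Y) + Q_1 + \dots + Q_{d'}}$ with $d' = d - 2 = g_x - 2g_y - 1$. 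To make this clean I expect to need Accola's lemmas: since $\linsys{D}$ is simple of dimension $2$ and $f^\ast g_2^1(Y)$ is a composite pencil ($g_2^1 = \gon(Y)$), Lemma~\ref{lemma:dim(g_l^t+g_m^s)} with $t = 2$, $s = 1$ gives $\dim\linsys{D + f^\ast g_2^1(Y)} \ge 4$, and I would use Lemma~\ref{lemma:dim(g_l^t-g_m^1)} in the contrapositive (or directly the residuation $\linsys{D - f^\ast g_2^1(Y)}$ having the expected dimension) to exhibit $D - f^\ast g_2^1(Y) \lineqv Q_1 + \dots + Q_{d'}$ effective, i.e. $f^\ast g_2^1(Y)$ \emph{sits inside} $\linsys{D}$. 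Combined with the degree count this forces the asserted form.

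The main obstacle, I expect, is the off-by-one: Proposition~\ref{proposition:primitive} does not literally apply to $g_{d-1}^1$, so I must reprove its conclusion in the borderline degree $d - 1 = g_x - 2g_y + 2$ using the extra input that $\gon(Y) = 2$. Concretely, in the proof of Proposition~\ref{proposition:primitive} the step ``$h^0(Y,\sheaf{O_Y}(N)) = h^0(Y,\sheaf{O_Y}(N+y)) = 1$ for all $y$'' used $\deg N \le \gon(Y) - 2 = 0$; now $\deg N \le 1$, so $\linsys{K_Y - N}$ can genuinely acquire a base point, but \emph{only} when $\deg N = 1$ and $N$ is half of the hyperelliptic pencil, and then the base point is controlled. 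I would also need to handle the possibility $\deg N = 0$ separately (where the old argument still shows $g_{d-1}^1$ primitive, contradicting non-primitivity, so this case cannot occur for a simple net of minimal degree) — this dichotomy is what pins $\deg M = g_x - 2g_y + 2$ exactly and makes $N$ a single well-understood point. A secondary technical point is checking that the base point of $\linsys{K_X - D + P}$ is honestly accounted for by $N$ and not absorbed into $\linsys{f^\ast M - (D-P)}$, which requires the base-point-freeness of the latter exactly as established inside the proof of Proposition~\ref{proposition:primitive} (using Lemma~\ref{lemma:j_M-configuration}(\ref{item:supp(f_*(D))-cap-supp(f_*(D'))=0})); that part goes through verbatim since it never used the degree bound.
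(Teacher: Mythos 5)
Your route is genuinely different from the paper's, and it has a real gap at the end. The paper never passes to the pencil $\linsys{D-P}$ or to the ruled surface at all: it works with the net itself, sets $r=\dim\linsys{D+f^{\ast}K_Y}=\dim\linsys{D+(g_y-1)f^{\ast}g_2^1(Y)}$, gets $r\ge 2g_y$ by iterating Lemma~\ref{lemma:dim(g_l^t+g_m^s)} over all $g_y-1$ copies of $f^{\ast}g_2^1(Y)$, and then rules out $r\ge 2g_y+1$ by dualizing: $\linsys{K_X-D-f^{\ast}K_Y}$ has degree $g_x-2g_y-1$, so by Castelnuovo--Severi (Lemma~\ref{lemma:CS-inequality}) it is induced by $f$, and the splitting $f_{\ast}\sheaf{O_X}=\sheaf{O_Y}\oplus\sheaf{O_Y}(-B)$ then forces $\linsys{D}=f^{\ast}\linsys{B-E}-P_1-\dotsb-P_t$, contradicting simplicity. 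Only with the exact equality $r=2g_y$ does Lemma~\ref{lemma:dim(g_l^t-g_m^1)} apply and give $\dim\linsys{D-f^{\ast}g_2^1(Y)}=0$. Your proposal has no substitute for this step: Lemma~\ref{lemma:dim(g_l^t-g_m^1)} needs the \emph{equality} $\dim\linsys{D+nf^{\ast}g_2^1(Y)}=2+2n$, whereas Lemma~\ref{lemma:dim(g_l^t+g_m^s)} applied once only gives $\dim\linsys{D+f^{\ast}g_2^1(Y)}\ge 4$, and using Lemma~\ref{lemma:dim(g_l^t-g_m^1)} ``in the contrapositive'' cannot produce the effectiveness of $D-f^{\ast}g_2^1(Y)$; that is precisely the hard point, and it is left as ``should let me write.''

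Your borderline ruled-surface analysis itself is essentially sound (Lemma~\ref{lemma:j_M-conductor} only needs $d\le g_x-1$, and the base-point-freeness of $\linsys{f^{\ast}M-(D-P)}$ does not use the degree bound), but note two corrections and one missing bridge. First, nothing forces $N$ into a special position: with $\deg N=1$ on a hyperelliptic $Y$, the base point of $\linsys{K_Y-N}$ is the hyperelliptic conjugate $\overline{N}$ of $N$, whatever $N$ is; your ``ramification-adjacent'' phrasing is off. Second, the base point of $\linsys{K_X-(D-P)}$ you should be tracking is $P$ itself (Riemann--Roch, since $\dim\linsys{D}=\dim\linsys{D-P}+1$), and your localization then yields $f(P)=\overline{N}$, i.e.\ $f_{\ast}D\lineqv B+g_2^1(Y)$. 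Even granting this, converting it into $\linsys{D-f^{\ast}g_2^1(Y)}\neq\varnothing$ still requires a dimension input: from $D+\sigma^{\ast}D=f^{\ast}f_{\ast}D$ one gets $h^0(D-f^{\ast}g_2^1(Y))=h^0(f^{\ast}B-D)=h^0(D+f^{\ast}K_Y)-2g_y$, so you again need $\dim\linsys{D+f^{\ast}K_Y}\ge 2g_y$, i.e.\ the iterated form of Lemma~\ref{lemma:dim(g_l^t+g_m^s)}, which your plan does not carry out. (A further minor point: your argument invokes the \S\ref{sec:primitive} machinery and hence genus bounds such as $g_x\ge 4g_y-2$, whereas the paper's proof of this theorem needs no such detour.) So the proposal could likely be completed along your lines, but as written the decisive step is missing.
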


\begin{proof}
Set
\begin{equation*}
g_{g_x + 2g_y - 1}^r = \linsys{D + f^{\ast}{K_Y}} = \linsys{D + (g_y - 1)f^{\ast}{g_2^1(Y)}}.
\end{equation*}

\noindent Claim. $r=2g_y$: For each $i=0, \dotsc, g_y-1$, set $g_{l(i)}^{t(i)} = \linsys{D + if^{\ast}{g_2^1(Y)}}$ and $g_4^1 = f^{\ast}{g_2^1(Y)}$. Note that $g_{l(i)}^{t(i)}$ is simple. By Lemma~\ref{lemma:dim(g_l^t+g_m^s)},
\begin{equation*}
t(i+1) \ge t(i) + 2
\end{equation*}
for all $i=0, \dotsc, g_y-2$, where $t(0)=2$. Hence
\begin{equation*}
r = t(g_y-1) \ge 2g_y.
\end{equation*}

Suppose, contrary to our claim, that $r \ge 2g_y + 1$. Consider the dual series
\begin{equation}\label{equation:g^s}
g_{g_x - 2g_y - 1}^s = \linsys{K_X - g_{g_x + 2g_y - 1}^r} = \linsys{K_X - D - f^{\ast}{K_Y}}.
\end{equation}
By the Riemann-Roch Theorem, we have $s = r - 2g_y \ge 1$. Since $\deg{g_{g_x - 2g_y - 1}^s} \le g_x - 2g_y$, it follows by the Castelnuovo-Severi inequality that $g_{g_x - 2g_y - 1}^s$ is induced by $f$, i.e,
\begin{equation}\label{equation:K_P-D-f^*K_Y=f^*E+P_i}
g_{g_x - 2g_y - 1}^s = f^{\ast}{\linsys{E}} + P_1 + \dotsb + P_t \end{equation}
for some effective $E \in \divisorgroup(Y)$, where $P_1, \dotsc, P_t \in X$ are (if any) base points.

By \eqref{equation:K_X-f^*K_Y+f^*B}, $K_X \lineqv f^{\ast}{K_Y} + f^{\ast}{B}$; it follows by \eqref{equation:g^s}, \eqref{equation:K_P-D-f^*K_Y=f^*E+P_i} that
\begin{equation*}
f^{\ast}{E} + P_1 + \dotsb + P_t \lineqv K_X - D - f^{\ast}{K_Y} \lineqv f^{\ast}{B} - D.
\end{equation*}
Therefore
\begin{equation*}
\linsys{D} = \linsys{f^{\ast}(B-E) - P_1 - \dotsb - P_t}.
\end{equation*}
On the other hand, by \eqref{equation:H^0(X, f^*(M))}, we have
\begin{equation*}
H^0(X, \sheaf{O_X}(f^{\ast}(B-E))) = H^0(Y,\sheaf{O_Y}(B-E) \oplus \sheaf{O_Y}(-E)) = H^0(Y, \sheaf{O_Y}(B-E)),
\end{equation*}
which implies that $\linsys{f^{\ast}(B - E)} = f^{\ast}{\linsys{B-E}}$; but then $\linsys{D}$ cannot be simple. This contradiction shows the claim.

Since $\dim{\linsys{D + (g_y-1)f^{\ast}{g_2^1(Y)}}} = 2 + 2(g_y-1)$ by Claim, it follows by Lemma~\ref{lemma:dim(g_l^t-g_m^1)} that $\dim\linsys{D - f^{\ast}{g_2^1(Y)}} = 0$. Therefore
\begin{equation*}
D = \linsys{f^{\ast}{g_2^1(Y)} + Q_1 + \dotsb + Q_{d'}}
\end{equation*}
for some $Q_i \in X$.
\end{proof}


\providecommand{\bysame}{\leavevmode\hbox to3em{\hrulefill}\thinspace}
\providecommand{\MR}{\relax\ifhmode\unskip\space\fi MR }
\providecommand{\MRhref}[2]{%
  \href{http://www.ams.org/mathscinet-getitem?mr=#1}{#2}
}
\providecommand{\href}[2]{#2}

\end{document}